\newtheorem{theo}{Theorem}
\newtheorem{prop}{Proposition}
\newtheorem{lemma}[theo]{Lemma}
\newtheorem{corollary}[theo]{Corollary}
\newtheorem{conjecture}[theo]{Conjecture}
\renewenvironment{proof}{ \emph{Proof}}{$\Box$}
\newcommand{\TT}{\mathbb{T}}
\newcommand {\ZZ} {\mathbb {Z}}
\newcommand {\NN} {\mathbb {N}}
\newcommand {\Eps} {\mathcal {E}}
\newcommand{\id}{\mathrm{id}}
\renewcommand{\gg}{\mathfrak{g}}
\renewcommand {\phi} {\varphi}
\newcommand{\refle}[1]{Lemma \ref{#1}}
\newcommand{\refprop}[1]{Proposition \ref{#1}}
\newcommand{\refeq}[1]{(\ref{#1})}
\renewcommand{\Im}{\mathrm{Im}}
\newcommand{\soc}{\mathrm{soc}}
\newcommand{\Ker}{\mathrm{Ker}}
\newcommand{\rad}{\mathrm{rad}}
\newcommand{\Hom}{\mathrm{Hom}}
\newcommand{\field}[1]{\mathbb{#1}}
\def\cplus{\hbox{$\subset${\raise0.3ex\hbox{\kern -0.55em ${\scriptscriptstyle +}$}}\ }}
\def\clplus{\hbox{$\subset${\raise0.3ex\hbox{\kern -0.55em ${\scriptscriptstyle +}$}}\ }}
\def\crplus{\hbox{$\supset${\raise1.05pt\hbox{\kern -0.55em ${\scriptscriptstyle +}$}}\ }}
\title{A categorification of the boson-fermion correspondence via representation theory of $sl(\infty)$}
\author[Igor Frenkel]{\;Igor Frenkel}
\address{
Igor Frenkel
\newline Department of Mathematics
\newline Yale University 
\newline 10 Hillhouse Avenue, P.O. Box 208283
\newline New Haven, CT 06520-8283, USA}
\email{frenkel@math.yale.edu}
\author[Ivan Penkov]{\;Ivan Penkov}
\address{
Ivan Penkov
\newline School of Engineering and Science
\newline Jacobs University Bremen
\newline Campus Ring 1
\newline 28759 Bremen, Germany}
\email{i.penkov@jacobs-university.de}
\author[Vera Serganova]{\;Vera Serganova}
\address{
Vera Serganova
\newline Department of Mathematics
\newline University of California Berkeley
\newline Berkeley CA 94720, USA}
\email{serganov@math.berkeley.edu}
\begin{document}
\maketitle
\begin{abstract} In recent years different aspects of categorification of the boson-fermion correspondence
have been studied. In this paper we propose a categorification of the boson-fermion correspondence based on
the category of tensor modules of the Lie algebra $sl(\infty)$ of finitary infinite matrices. By $\mathbb T^+$ we denote
the category of ``polynomial'' tensor  $sl(\infty)$-modules. There is a natural ``creation'' functor $\mathcal T_N: \mathbb T^+\to \mathbb T^+$,
$M\mapsto N\otimes M,\quad M,N\in \mathbb T^+$. The key idea of the paper is to employ the entire category $\mathbb T$ of tensor $sl(\infty)$-modules
in order to define the ``annihilation'' functor $\mathcal D_N: \mathbb T^+\to \mathbb T^+$ corresponding to $\mathcal T_N$.
We show that the relations allowing to express fermions via bosons arise from relations in the cohomology of complexes of linear endofunctors on $\mathbb T^+$. 

\textbf{Mathematics Subject Classification (2010):} Primary 17B65, 17B69. 
 
\textbf{Key words:} categorification, boson-fermion correspondence, tensor module, linear endofunctor.
\end{abstract}
\section {Introduction}

The origin of the boson-fermion correspondence can be traced back to the celebrated 
Jacobi triple product identity

\begin{equation}\label{Jacobi}
  \frac{\sum_{n \in \ZZ}t^nq^\frac{n^2}{2}}{\prod_{n \geq 1}(1-q^n)} = \prod_{n \geq 1} (1+tq^{n-\frac{1}{2}})(1+t^{-1}q^{n-\frac{1}{2}})
\end{equation}
which one should consider as an equality of two generating functions

\begin{equation}\label{Jacgenf}
 \sum_{n \in \ZZ, 2m \geq 0} b_{n,m} t^nq^m = \sum_{n \in \ZZ, 2m \geq 0} f_{n,m} t^nq^m\nonumber
\end{equation}
with nonnegative integral coefficients.

The boson-fermion correspondence can be viewed as a ``categorification'' of this identity.
Namely, it is an isomorphism of doubly graded vector spaces, called bosonic and fermionic 
Fock spaces,

\begin{equation}\label{vectbf}
 \bold{B} = \bigoplus_{n \in \ZZ, 2m \geq 0} \bold{B}_{n,m} \cong \bigoplus_{n \in \ZZ, 2m \geq 0} \bold{F}_{n,m} = \bold{F}
\end{equation}
such that

\begin{equation}\label{vectcons}
  \dim \bold{B}_{n,m} = b_{n,m}, \;\;\;  \dim \bold{F}_{n,m} = f_{n,m},\nonumber
\end{equation}
and their structure yields respectively the left-hand and right-hand sides of (\ref{Jacobi}). The 
identity (\ref{Jacobi}) itself follows then from the isomorphism (\ref{vectbf}). The bosonic Fock space
is naturally a representation of an infinite-dimensional Heisenberg Lie algebra, while the 
fermionic Fock space is naturally a representation of an infinite-dimensional Clifford
algebra, see section $2$ for details.

The fact that the Heisenberg Lie algebra can be constructed from the Clifford algebra and vice versa was 
first noticed in the physics literature \cite{Mn}, and was given the name boson-fermion correspondence.
It was understood in \cite{F} that the bosonic and fermionic Fock spaces are just two realizations of 
an affine Lie algebra representation. In particular, one can see the bosonic and fermionic Fock
spaces (\ref{vectbf}) as a representation of  a central extension $\widehat{sl}(\infty)$ of the
Lie algebra of infinite matrices with finitely many non-zero diagonals, \cite{DJKM}.

If we consider the subspaces $ \bold{B}_0\subset  \bold{B}$, $\bold{F}_0\subset \bold{F}$

\begin{equation}\label{sub}
 \bold{B}_0 := \bigoplus_{m \geq 0} \bold{B}_{0,m} \cong \bigoplus_{m \geq 0} \bold{F}_{0,m} =: \bold{F}_0,
\end{equation}
then the left-hand side of (\ref{Jacobi}) implies that

\begin{equation}\label{subdim}
 \dim \bold{B}_{0,m} = \dim\bold{F}_{0,m} = p(m)\nonumber
\end{equation}
where $p:\NN\to \NN$ is the partition function. The value $p(m)$ equals the size of the character table of the symmetric group $S_m$, and
the transition matrix between the bases in $\bold{B}_{0,m}$ and $\bold{F}_{0,m}$ is precisely the 
character table of $S_m$. This suggests a second categorification of the boson-fermion correspondence, more
specifically of $\bold{B}_0 \cong \bold{F}_0$, via representation theory of all symmetric groups
$\{S_n\}_{n \geq 0}$ or, by Schur-Weyl duality, via representation theory of the Lie algebra $sl(\infty):=sl(\infty,\mathbb C)$ 
of traceless finitary infinite matrices.

In this second categorification $sl(\infty)$  appears without a central extension and one is led to 
consider the category of tensor modules $\mathbb{T}^+$ whose Grothendieck ring is given by (\ref{sub}). 
The passage to the full boson-fermion correspondence $\bold{B} \cong \bold{F}$ is achieved by considering the 
derived category $\mathbb{DT}^+$.

Initially the idea of categorification was motivated by an attempt to lift three-dimensional 
invariants to four-dimensional couterparts replacing nonnegative integers by vector spaces of corresponding 
dimensions, vector spaces with bases by categories, categories by $2$-categories, etc. \cite{CF}, \cite{K1}.
It was quickly realized that the semisimple categories that give rise to three-dimensional invariants should be
generalized to more interesting abelian or even triangulated categories, to get a nontrivial categorification 
at the next level. Besides in topology, these ideas were successfully used in algebraic geometry and 
representation theory and have led to many interesting examples of categorification in the last 15 years.

In our example, both $\mathbb{T}^+$ and $\mathbb{DT}^+$ are semisimple categories and, in order to obtain
a nontrivial categorification, one should look for non-semisimple generalizations. Luckily, there is a 
natural category of tensor modules, denoted by $\mathbb{T}$ \cite{DPS}, based on the vector representation 
$V$ of $sl(\infty)$ and its restricted dual $V_*$. Unlike its finite-dimensional analogue, $\mathbb{T}$ is not a
semisimple category, see section $3$ for details. The starting point of our categorification of the boson-fermion 
correspondence is the projection functor

\begin{equation}\label{projfunct}
 (\mbox{} \cdot \mbox{})^+:\mathbb{T}\rightarrow \mathbb{T}^+.\nonumber
\end{equation}
This allows us to define, besides the obvious ``creation functor''

\begin{equation}\label{funct}
 \mathcal{T}_N : \mathbb{T}^+ \rightarrow \mathbb{T}^+, \;\;\; M \rightarrow N \otimes M,\;\; N,M \in \mathbb{T}^+,\nonumber
\end{equation}
its important counterpart, the ``annihilation functor''

\begin{equation}\label{anfunct}
 \mathcal{D}_N : \mathbb{T}^+ \rightarrow \mathbb{T}^+, \;\;\; M \rightarrow (N_* \otimes M)^+,\;\; N,M \in \mathbb{T}^+.\nonumber
\end{equation}
We show in section $4$ that $ \mathcal{D}_N$ is left adjoint to $\mathcal{T}_N$.

Together, the two functors $\mathcal{T}_N$ and $\mathcal{D}_N$  generate an abelian subcategory $\widehat{\mathbb{T}}$ of the category
of linear endofunctors on $\TT^+$, and the category $\widehat{\mathbb{T}}$ can be viewed as a realization of $\mathbb T$. 
In particular, the simplest nontrivial exact sequence in ${\mathbb{T}}$
\begin{equation}\label{Halgebra1}
 0 \rightarrow sl(V)\rightarrow V_*\otimes V \rightarrow \mathbb C \rightarrow 0\nonumber
\end{equation}
gives rise to a categorification of the simplest nontrivial  
relation in the Heisenberg algebra acting on $\bold{B}$:

\begin{equation}\label{Halgebra}
 0 \rightarrow \mathcal{T}_V \circ \mathcal{D}_V \rightarrow \mathcal{D}_V \circ \mathcal{T}_V \rightarrow Id \rightarrow 0.
\end{equation}

To categorify the boson-fermion correspondence one needs to consider the creation and annihilation functors corresponding 
to the extreme partitions of $n$: $n$ itself and $1_n = \underbrace{(1,...,1)}_{\text{n}}$. We denote

\begin{equation}\label{notfunct}
 \mathcal{H}_n:=\mathcal{T}_{S^n(V)},\;\; \Eps_n:=\mathcal{T}_{\wedge^{n}(V)},\;\;\;\; \mathcal{H}^*_n:=\mathcal{D}_{S^n(V)},\;\; \Eps^*_n:=\mathcal{D}_{\wedge^n(V)}.
\end{equation}
The exact sequences between these functors yield a categorification of certain identities that appear in the boson-fermion
correspondence, see section $5$.

The creation and annihilation functors (\ref{notfunct}) and the relations between them provide the building blocks for the
categorifications of the operators in the Heisenberg and Clifford algebras. However, to define the corresponding  functors
we need to work with the category of complexes in $\widehat{\mathbb{T}}$. In section 6 we introduce complexes of functors

\begin{equation}\label{note1}
\dots \rightarrow \mathcal{H}_{p+1} \circ \Eps_{q+1}^* \rightarrow \mathcal{H}_{p} \circ \Eps_{q}^* \rightarrow \mathcal{H}_{p-1} \circ \Eps_{q-1}^* \dots, \nonumber
\end{equation}

\begin{equation}\label{note2}
 \dots\rightarrow \Eps_{p+1} \circ \mathcal{H}_{q+1}^* \rightarrow \Eps_{p} \circ \mathcal{H}_{q}^*  \rightarrow \Eps_{p-1} \circ \mathcal{H}_{q-1}^* \dots \nonumber 
\end{equation}
for  $p-q=a$, which we denote by  $\mathcal{X}_a$, and $\mathcal{X}_a^*$, respectively.
Then in section $7$ we verify that the commutation relations of these complexes of functors yield, at the Grothendieck ring level,
the usual relations between the generators of the Clifford algebra.

Finally, in section $8$ we introduce the complexes of functors $\mathcal{P}_k$ and $\mathcal{P}_k^*$ and verify that their commutation
relations categorify the familiar relations between the generators of the Heisenberg algebra.

Different aspects of categorification of the boson-fermion correspondence have been studied by several authors. A combinatorial 
version of the categorification of the Heisenberg algebra using a counterpart of the functors (\ref{notfunct}) is obtained
in \cite{K2}. It has been further extended in \cite{LS1} and a survey of related works can be found in \cite{LS2}. Another 
categorification of the bosonic Fock space and the action of the Lie algebra $\widehat{sl}(\infty)$ 
has been obtained in a series of papers \cite{HY1}, \cite{HTY}, \cite{HY2}. This approach is 
closest to ours, though our tensor representations of $sl(\infty)$ are ``rational'' (not ``polynomial'') which leads to a non-semisimple 
category. In a more general setting, the authors of \cite{CL1}, \cite{CL2} have constructed
a categorification of the basic representation of the affine Lie algebras using the derived categories of coherent
sheaves on Hilbert schemes of points on ALE spaces. For the simplest ALE space $\mathbb{C}^2$ their construction should yield
a categorification of the boson-fermion correspondence. It is a very interesting problem to obtain these more general 
geometric categories as representation categories for appropriate generalizations of $sl(\infty)$. Extending the 
existing terminology of categorification it would be natural to call a solution to this problem --- representification ---
namely, a realization of a geometric category as a certain representation category associated with a particular Lie algebra. 
Strictly speaking, one still has to verify that our representation-theoretic categorification of boson-fermion correspondence is equivalent to the earlier
geometric categorification. If so, one can view  our present paper as a first example of representification of the categories of sheaves on Hilbert
schemes of points on $\mathbb{C}^2$.  

{\bf Acknowledgements.} I. P. and V. S. acknowledge partial support from the Max-Planck Institute for Mathematics in Bonn (fall 2012) as well as continued 
partial support through the DFG priority program ``Representation Theory'' (2011--2014).  I. F. and V. S. have been supported by NSF grants  DMS-1001633
and DMS - 1303301, respectively.

\section {Recollection of the boson-fermion correspondence}

Recall that the
boson-fermion correspondence relates the actions of the
infinite Heisenberg and Clifford algebras on the Fock space, see for instance \cite{F}
or \cite{DJKM}. 

The ground field is $\mathbb C$.
Let $\bold{Cf}$ be the infinite-dimensional Clifford algebra over
with generators
$\{\psi_i,\psi^*_i\}_{i\in\mathbb Z}$ and relations 
\begin{equation}\label{clifford}
\psi_i\psi_j+\psi_j\psi_i=\psi^*_i\psi^*_j+\psi^*_j\psi^*_i=0,\,\,\psi_i\psi^*_j+\psi^*_j\psi_i=\delta_{ij}.
\end{equation}
The {\it Fock space} $\bold{F}$ can be defined as the induced module
$\bold{Cf}\otimes_{\bold{Cf}^+}\mathbb Z$, where $\bold{Cf}^+$ is the subalgebra generated
by $\psi_i$ for $i\geq 0$ and $\psi^*_i$ for $i<0$. The Fock space  is an
irreducible $\bold{Cf}$-module and is equipped with the grading
$\bold{F}=\bigoplus_{m\in\mathbb Z}\bold{F}(m)$ induced by the grading on
$\bold{Cf}$ given by deg $\psi_i=1$, deg $\psi_i^*=-1$.

Let $\bold {H}$ denote the (infinitely generated) Weyl algebra
with generators $\{p_n,p^*_n\}_{n\geq 1}$ and relations
\begin{equation}\label{boson}
[p_n,p_m]=[p_n^*,p_m^*]=0,\,\,[p_n,p^*_m]=n\delta_{mn}.
\end{equation}
The notation $\bold {H}$ reflects the fact that the Lie algebra defined by the relations (\ref{boson}) is the infinite-dimensional
Heisenberg algebra.
One can define an action of  $\bold {H}$ on $\bold{F}$ by setting 
$$p_n=\sum_{i\in \mathbb Z}\psi_i\psi^*_{i+n},\,\,p^*_n=\sum_{i\in \mathbb Z}\psi_i\psi^*_{i-n}.$$
Note that $\bold{H}$ has two commutative subalgebras: 
$\bold {H}^+$, generated by $p_n$ for $n\geq 1$, and $\bold {H}^+_*$, generated by $p_n^*$ for $n\geq 1$.

We identify $\bold{H}^+$ with the algebra  
of symmetric functions of infinitely many variables,
$$\bold{H}^+=\bigoplus_{\lambda\in\bold{Part}}\mathbb{C} s_\lambda\cong\mathbb{C}[p_1,p_2,...]=\mathbb{C}[h_1,h_2,...]=\mathbb{C}[e_1,e_2,...].$$
Here $\bold{Part}$ is the set of all partitions, $s_\lambda$ are the
Schur functions, $p_i$-s are the sums of powers, $e_i$-s are the
elementary symmetric functions, $h_i$-s are the sums of all monomials of
degree $i$. Recall  \cite{M} that $\{h_i\}_{i\geq 1},\{e_i\}_{i\geq 1}$ 
are expressed in terms of  $\{p_i\}_{i\geq 1}$ as follows
\begin{equation}\label{symid}
H(z)\stackrel{def}{=}\sum_{n\geq 0}h_n z^n=\exp\left(\sum_{n\geq 1}\frac{p_n z^n}{n}\right),\;
E(z)\stackrel{def}{=}\sum_{n\geq 0}e_n z^n=\exp\left(\sum_{n\geq  1}(-1)^{n-1}\frac{p_n z^n}{n}\right),
\end{equation}
where we set $h_0:=1$ and $e_0:=1$. 

In a similar way   
$$\bold{H}^+_*\cong\mathbb{C}[p^*_1,p^*_2,...]\cong\mathbb{C}[h^*_1,h^*_2,...]\cong\mathbb{C}[e^*_1,e^*_2,...],$$
and 
\begin{equation}\label{symid*}
H^*(z)\stackrel{def}{=}\sum_{n\geq 0}h^*_n z^{-n}=\exp\left(\sum_{n\geq 1}\frac{p^*_n z^{-n}}{n}\right),\;
E^*(z)\stackrel{def}{=}\sum_{n\geq 0}e^*_n z^{-n}=\exp\left(\sum_{n\geq 1}(-1)^{n-1}\frac{p^*_n z^{-n}}{n}\right);
\end{equation}
here again $h^*_0:=1$ and $e^*_0:=1$. Note that $H^*(z),E^*(z)$ 
contain only non-positive powers of $z$.

It is possible to show \cite{DJKM} that for each $m\in\mathbb Z$
there is an isomorphism of $\bold{H}$-modules 
$$\bold F(m)\simeq\bold H\otimes_{\bold H_*^+}\mathbb C.$$ 
In particular, each $\bold{F}(m)$ is an irreducible
$\bold{H}$-module. Moreover, as an $\bold{H}^+$-module $\bold{F}(m)$ 
can be identified with $\bold{H}^+$.
In what follows we consider $h_n,e_n,p_n,h^*_n,e_n^*$ and $p_n^*$ as 
linear operators on $\bold{H}^+$ and the identities
(\ref{symid}) and (\ref{symid*}) as  identities relating these
linear operators. Furthermore, it is a known fact that $h_n^*$ (respectively, $e_n^*$) are operators dual to
$h_n$ (respectively, $e_n$)  with respect  to the natural inner product on $\bold{H}^+$ for which
$\{s_\lambda\}_{\lambda\in\bold{Part}}$ forms an orthonormal basis.

Set 
\begin{equation}\label{eqvert}
X(z)=H(z)E^*(-z),\,\,X^*(z)=E(-z)H^*(z).
\end{equation}
Roughly speaking, up to a shift, the coefficients of $X(z)$ and $X^*(z)$ are the
fermions $\psi_i$ and $\psi_i^*$.
More precisely, 
there exists  an automorphism $s$ of the $\bold {H}$-module $\bold{F}$ such that 
$s(\bold{F}(m))= \bold{F}(m+1)$ and the restrictions of $\psi_i$ and $\psi_i^*$ on
$\bold{F}(m)$ are recovered by the formulas
\begin{equation}\label{bf}
\sum_{i\in\mathbb Z}\psi_iz^i=sz^{m}X(z),\,\, \sum_{i\in\mathbb Z}\psi^*_iz^i=s^{-1}z^{-m}X^*(z).
\end{equation}

To prove (\ref{bf}) one has to show that the Fourier coefficients of the vertex operators
$X(z)$ and $X^*(z)$ satisfy a version of the Clifford relations  (\ref{clifford}). It is known \cite{F}
that (\ref{clifford}) is equivalent to the following vertex operator identities
\begin{equation}\label{v1}
X(z)X(w)+\frac{w}{z} X(w)X(z)=0,
\end{equation}
\begin{equation}\label{v2}
X^*(z)X^*(w)+\frac{w}{z} X^*(w)X^*(z)=0,
\end{equation}
\begin{equation}\label{v3}
X(z)X^*(w)+\frac{z}{w}X^*(w)X(z)=\sum_{n\in\mathbb Z}\frac{z^n}{w^n}.
\end{equation}
In section 7 we provide a categorical proof of these identities by showing
that they are corollaries of certain relations between endofunctors on a category
of $sl(\infty)$-modules.

\section{The category $\mathbb T$ and the projection functor}

Denote by $V$ and $V_*$ a pair of countable dimensional vector spaces
in perfect duality, i.e. with a fixed non-degenerate linear map $\langle\cdot,\cdot\rangle : V\times V_*\to\mathbb C$.
As proved by G. Mackey \cite{Mk}, all such triples $(V,V_*,\langle\cdot,\cdot\rangle)$ are
isomorphic. The vector space $V\otimes V_*$ is naturally endowed with
the structure of an
associative algebra such that $$(v\otimes w)\cdot (v'\otimes w')=\langle v'\otimes w\rangle v\otimes w',$$ and hence also with a Lie algebra structure.
It is easy to check that this Lie algebra is isomorphic to the Lie algebra
$gl(\infty)$ which by definition is the Lie algebra of
infinite matrices $(a_{ij})_{i,j\in\mathbb Z}$ with finitely many non-zero entries. The subalgebra $\gg=\Ker \langle\cdot,\cdot\rangle$ is isomorphic to
$sl(\infty)$, i.e. to the subalgebra of traceless matrices
in  $gl(\infty)$. 

In \cite{DPS} a category $\mathbb T$ of tensor
$\gg$-modules has been introduced. More precisely, $\mathbb T$ is the
category of finite-length submodules (equivalently, finite-length
subquotients) of direct sums of copies of the tensor algebra
$T^\cdot(V\oplus V_*)$ considered as a $\gg$-module. It is also possible to define this category intrinsically,
and in fact this is done in three different ways in \cite{DPS}. Note that
$\mathbb T$ is a monoidal category with respect to tensor product of $\gg$-modules.

Consider the tensor algebra $T^\cdot (V)$. By Schur--Weyl duality,
$$T^\cdot (V)=\bigoplus_{\lambda\in\bold{Part}}V_\lambda\otimes A^\lambda$$
where $V_\lambda$ is the image of a Young projector corresponding to
$\lambda$ and $A^\lambda$ stands for the irreducible
representation of the symmetric group $\field{S}_{|\lambda|}$ corresponding to
the partition $\lambda$; here $|\lambda|$ is the degree of $\lambda$,
i.e. $|\lambda|=\sum_{i}\lambda_i$, where
$\lambda=\{\lambda_i\}$. Note that each $V_\lambda$ is an irreducible
$\gg$-module. Similarly,
$$T^\cdot (V_*)=\bigoplus_{\lambda\in\bold{Part}}(V_*)_\lambda\otimes A^\lambda.$$

The tensor algebra $T^\cdot(V\oplus V_*)$ is not a semisimple
$\gg$-module. More precisely, the $\gg$-module $T^{m,n}=V^{\otimes m}\otimes V_*^{\otimes n}$
is $\gg$-semisimple if and only if $mn=0$. In \cite{PStyr} the socle
filtration of $T^{m,n}$ is described as follows. 
Let $\Phi_{i_1...i_k|j_1...j_k}: T^{m,n}\rightarrow T^{m-k,n-k}$
be the contraction map given by 
$$\Phi_{i_1...i_k|j_1...j_k}(v_1\otimes ...\otimes v_m\otimes w_1\otimes ...\otimes w_n)=\langle v_{i_1}, w_{j_1} \rangle ...\langle v_{i_k}, w_{j_k} \rangle v_1\otimes ...\otimes\hat{v}_{i_1}\otimes ...\otimes\hat{v}_{i_k}\otimes ...\otimes v_m\otimes w_1\otimes ...\otimes\hat{w}_{j_1}\otimes ...\otimes\hat{w}_{j_k}\otimes ...\otimes w_n.$$
Then the socle filtration of $T^{m,n}$ is given by
\begin{equation}\label{socker}
\soc^{k}(T^{m,n})=\bigcap_{i_1...i_k|j_1...j_k}\Ker(\Phi_{i_1...i_k|j_1...j_k}).
\end{equation}
Here $k=1,\dots,\operatorname{min}(m,n)+1$ and we use the convention $\soc=\soc^1$.

The modules $V_\lambda\otimes(V_*)_\mu$ are indecomposable and represent
the isomorphism classes of indecomposable direct summands in
$T^\cdot(V\oplus V_*)$. Moreover, using (\ref{socker}) it is shown in
\cite{PStyr} that $V_\lambda\otimes (V_*)_\mu$ has a simple
socle. Denote this simple module by $V_{\lambda,\mu}$. Then, for variable $\lambda$ and $\mu$,
$V_{\lambda,\mu}$ exhaust all (up to isomorphism) simple modules of
$\TT$. In this
way, the simple objects of $\mathbb T$ are labeled by pairs of
partitions $\lambda,\mu$. Note that 
$V_\lambda=V_{\lambda,\emptyset}$ and $(V_*)_\lambda=V_{\emptyset,\lambda}$.

Furthermore, $V_\lambda\otimes (V_*)_\mu$  is an injective hull of
$V_{\lambda,\mu}$. It is shown in \cite{DPS} that any indecomposable
injective object of $\TT$ is isomorphic to $V_\lambda\otimes (V_*)_\mu$ for
some $\lambda,\mu$.
For the layers of the socle filtration of $V_\lambda\otimes (V_*)_\mu$ we have
\begin{equation}\label{socmult}
[\soc^{k+1}(V_\lambda\otimes  (V_*)_\mu)/\soc^{k}(V_\lambda\otimes(V_*)_\mu):V_{\lambda',\mu'}]=
\sum_{\gamma\in\bold{Part},\;|\;\gamma|=k}N^\lambda_{\lambda',\gamma}N^\mu_{\mu',\gamma}
\end{equation}
where $N^\nu_{\nu',\gamma}$ denote the Littlewood--Richardson
coefficients \cite{PStyr}. 
\begin{lemma}\label{dimhom} Let $|\lambda|-|\lambda'|=|\mu|-|\mu'|=1$.
Then 
$$\operatorname{dim}\Hom(V_\lambda\otimes (V_*)_\mu,V_{\lambda'}\otimes (V_*)_{\mu'})\leq 1,$$
and 
$$\Hom(V_\lambda\otimes (V_*)_\mu,V_{\lambda'}\otimes(V_*)_{\mu'})\simeq\mathbb C$$
if and only if $\lambda'$ and $\mu'$ are obtained from $\lambda$ and
$\mu$ respectively by removing one box.
\end{lemma}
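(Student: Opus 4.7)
The overall plan is to identify the Hom-space $\Hom(V_\lambda \otimes (V_*)_\mu, V_{\lambda'} \otimes (V_*)_{\mu'})$ with a composition-factor multiplicity, and then evaluate that multiplicity by summing the socle-layer formula (\ref{socmult}) and invoking the Pieri rule.

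First I would establish the following general identity, valid for any finite-length module $M$ of $\TT$ and any simple $L \in \TT$ with injective hull $I(L)$:
$$\dim \Hom(M, I(L)) = [M:L].$$
The proof is by induction on the length of $M$. For the base case $M = L'$ simple, any nonzero morphism $L' \to I(L)$ is injective by Schur and has image contained in $\soc I(L) = L$, forcing $L' \simeq L$ and pinning the map down up to a scalar, so $\dim \Hom(L', I(L)) = \delta_{L,L'} = [L':L]$. For the inductive step, given a short exact sequence $0 \to M' \to M \to M'' \to 0$, injectivity of $I(L)$ gives $\Ext^1(-, I(L)) = 0$, so applying $\Hom(-, I(L))$ yields a short exact sequence of Hom-spaces and the dimensions add to $[M':L] + [M'':L] = [M:L]$.

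I would then specialize this to $M = V_\lambda \otimes (V_*)_\mu$, $L = V_{\lambda',\mu'}$, and $I(L) = V_{\lambda'} \otimes (V_*)_{\mu'}$. Both hypotheses are supplied by the discussion preceding the lemma: the module $V_\lambda \otimes (V_*)_\mu$ is of finite length in $\TT$ because its socle filtration (\ref{socker}) has only finitely many layers, each a finite direct sum of simples by (\ref{socmult}); and $V_{\lambda'} \otimes (V_*)_{\mu'}$ has been identified as the injective hull of $V_{\lambda',\mu'}$. Summing (\ref{socmult}) over all socle layers yields
$$\dim \Hom(V_\lambda \otimes (V_*)_\mu, V_{\lambda'} \otimes (V_*)_{\mu'}) = \sum_{\gamma \in \bold{Part}} N^\lambda_{\lambda',\gamma}\, N^\mu_{\mu',\gamma}.$$
Under the hypothesis $|\lambda|-|\lambda'| = |\mu|-|\mu'| = 1$, each nonvanishing Littlewood--Richardson coefficient forces $|\gamma| = 1$, so only $\gamma = (1)$ contributes. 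The Pieri rule then gives $N^\lambda_{\lambda',(1)}\, N^\mu_{\mu',(1)} = 1$ precisely when $\lambda'$ and $\mu'$ are obtained from $\lambda$ and $\mu$ by removing one box, and $0$ otherwise, yielding both halves of the lemma simultaneously.

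The one step I expect to require some care is the general identity $\dim \Hom(M, I(L)) = [M:L]$: the subtlety is bookkeeping, verifying that $\TT$ really supplies the finite-length property together with the injective-hull description needed for the induction. Both are provided by the recalled structure of $\TT$ from \cite{DPS} and \cite{PStyr}, so no further input on the category is needed, and the lemma reduces to the combinatorial Pieri computation sketched above.
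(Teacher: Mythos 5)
Your proof is correct and follows the same route as the paper, which simply cites the socle-multiplicity formula (\ref{socmult}) together with the injectivity of $V_{\lambda'}\otimes (V_*)_{\mu'}$; you have merely spelled out the standard intermediate step $\dim\Hom(M,I(L))=[M:L]$ and the Pieri-rule evaluation that the paper leaves implicit. No discrepancies to report.
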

\begin{proof} The statement follows from (\ref{socmult}) and the 
  injectivity of $V_{\lambda'}\otimes (V_*)_{\mu'}$. 
\end{proof}

Twisting by an outer involution of $\gg$ yields an involution (equivalence
of monoidal categories whose square is the identity
functor) $$(\;\cdot\;)_*: \mathbb T\to\mathbb T$$ which maps
$V_{\lambda,\mu}$ to $V_{\mu,\lambda}$.

By $\field{T}^+$ we denote the full semisimple subcategory
of $\field{T}$ consisting of $\mathfrak g$-modules whose simple constituents are isomorphic to $V_\lambda$ for
$\lambda\in\bold{Part}$.
Note that $V_\lambda$ is injective as an object of $\mathbb T$, hence
any object of $\mathbb T^+$ is injective in $\mathbb T$.

By $\mathbb {DT}^+$ we denote the derived category of $\mathbb T^+$. As $\mathbb T^+$ is semisimple, 
$\mathbb {DT}^+$ is semisimple, and its simple objects $V_\lambda[n]$
are labeled by pairs $\lambda\in\bold{Part},n\in\mathbb Z$.

If $M$ is an object of $\mathbb T$, $\mathbb T^+$ or $\mathbb {DT}^+$, then $[M]$ denotes the class of $M$ in the corresponding Grothendieck ring;
furthermore, $\mathcal K({\cdot})$ stands for complexified Grothendieck ring.
\begin{lemma}\label{grfock} The map $[V_\lambda]\mapsto s_\lambda$ extends to an isomorphsim $\operatorname{ch}:\mathcal K(\mathbb T^+)\to \bold{H}^+$.
\end{lemma}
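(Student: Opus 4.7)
The plan is in two steps. First, I would observe that the assignment $[V_\lambda]\mapsto s_\lambda$ extends uniquely to a $\CC$-linear isomorphism. Since $\TT^+$ is semisimple with simple objects $\{V_\lambda\}_{\lambda\in\bold{Part}}$, the classes $[V_\lambda]$ form a $\CC$-basis of $\mathcal K(\TT^+)$. On the other side, $\{s_\lambda\}_{\lambda\in\bold{Part}}$ is a basis of $\bold{H}^+=\bigoplus_\lambda \CC s_\lambda$, as recalled in Section 2. Thus a unique linear extension $\ch$ exists and is automatically a linear bijection.

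The second, and principal, step is to verify that $\ch$ respects the ring structure. The product in $\mathcal K(\TT^+)$ is induced by the tensor product of $\gg$-modules: since $V_\lambda\otimes V_\mu$ is a direct summand of $V^{\otimes(|\lambda|+|\mu|)}\cong\bigoplus_\nu V_\nu\otimes A^\nu$ by Schur--Weyl duality, it lies in $\TT^+$, so $\TT^+$ is indeed a monoidal subcategory of $\TT$. What needs to be established is the decomposition
\[ V_\lambda\otimes V_\mu\;\cong\;\bigoplus_\nu V_\nu^{\oplus N^\nu_{\lambda,\mu}}, \]
where $N^\nu_{\lambda,\mu}$ are the Littlewood--Richardson coefficients. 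Granted this, the classical identity $s_\lambda s_\mu=\sum_\nu N^\nu_{\lambda,\mu}s_\nu$ in $\bold{H}^+$ shows at once that $\ch$ is a ring homomorphism, and therefore a ring isomorphism.

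The main obstacle is thus the above tensor product decomposition for $\gg=sl(\infty)$. I would deduce it by reduction to the finite-dimensional case. Realize $V$ as $\varinjlim\CC^n$ with $\gg=\varinjlim sl(n)$, so that $V_\lambda=\varinjlim V_\lambda(\CC^n)$ is the stable limit of the finite-dimensional Schur modules. For every $n\geq|\lambda|+|\mu|$, the classical Littlewood--Richardson rule for $sl(n)$ gives
\[ V_\lambda(\CC^n)\otimes V_\mu(\CC^n)\;\cong\;\bigoplus_\nu V_\nu(\CC^n)^{\oplus N^\nu_{\lambda,\mu}}, \]
with multiplicities independent of $n$. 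Passing to the direct limit, and using that tensor products commute with the directed colimits in question, one obtains the required isomorphism in $\TT^+$. Everything else reduces to bookkeeping with bases, so this LR-type decomposition is the only non-formal ingredient in the argument.
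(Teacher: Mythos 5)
Your argument is correct, but it takes a different route from the paper. The paper defines $\operatorname{ch}$ intrinsically as the character map: the character of any object of $\TT^+$ is a symmetric function on the diagonal subalgebra of $gl(\infty)$, the well-known fact $\operatorname{ch}([V_\lambda])=s_\lambda$ is invoked, and bijectivity follows because $\{s_\lambda\}$ is a basis of $\bold{H}^+$; multiplicativity is then automatic, since characters are multiplicative under tensor product. You instead \emph{define} $\operatorname{ch}$ by matching the two bases $\{[V_\lambda]\}$ and $\{s_\lambda\}$ (so bijectivity is immediate from semisimplicity of $\TT^+$) and carry the whole burden in verifying the ring structure, via the decomposition $V_\lambda\otimes V_\mu\cong\bigoplus_\nu V_\nu^{\oplus N^\nu_{\lambda,\mu}}$ over $sl(\infty)$ together with the classical identity $s_\lambda s_\mu=\sum_\nu N^\nu_{\lambda,\mu}s_\nu$. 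That decomposition is indeed the only non-formal point, and your stable-limit reduction to $sl(n)$, $n\gg0$, is a standard and valid way to get it (one could get it even more directly from the Schur--Weyl decomposition $T^\cdot(V)=\bigoplus_\nu V_\nu\otimes A^\nu$ quoted in Section 3, since $V_\lambda\otimes V_\mu$ is the image of the projector $c_\lambda\otimes c_\mu$ acting on $V^{\otimes(|\lambda|+|\mu|)}$, whence the multiplicity of $V_\nu$ is $\dim\,(c_\lambda\otimes c_\mu)A^\nu=N^\nu_{\lambda,\mu}$); note that the paper itself freely uses $\dim\Hom_\gg(V_\lambda,V_\mu\otimes V_\nu)=N^\lambda_{\mu,\nu}$ in the proof of Lemma \ref{adjoint}. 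In short: the paper's character-theoretic definition buys multiplicativity for free at the cost of citing a known character computation, while your version is more self-contained on the ring-homomorphism side and reduces everything to the Littlewood--Richardson rule; just be slightly careful, in the limit argument, to justify that the stable multiplicities really compute $\dim\Hom_\gg(V_\nu,V_\lambda\otimes V_\mu)$ for $\gg=sl(\infty)$ (e.g.\ via the Schur--Weyl argument above), which is a minor point rather than a gap.
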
 
\begin{proof} The character of any representation in $\mathbb T^+$ is a symmetric function on the diagonal subalgebra
of $gl(\infty)$. By definition, the homomorphism $\operatorname{ch}$ assigns to any element of the complexified Grothendieck ring 
the corresponding linear combination of characters of modules. It is well known that $\operatorname{ch}([V_\lambda])=s_\lambda$.
Since $\{s_\lambda\}_{\lambda\in\bold{Part}}$ is a basis in $\bold{H}^+$, $\operatorname{ch}$ is an isomorphism.
\end{proof}

We denote by $\operatorname{Sch}$ the map from $\mathbb T^+\to \bold{H}^+$ defined by $\operatorname{Sch}(\cdot):=\operatorname{ch}([\cdot])$.
For a given exact functor $\mathcal F:\mathbb T^+\to\mathbb T^+$ there
exists a unique linear operator $[\mathcal F]:\bold{H}^+\to\bold{H}^+$
such that $[\mathcal F]\circ\operatorname{Sch}=\operatorname{Sch}\circ \mathcal F$.

Define a functor $(\;)^+:\field{T}\rightarrow\field{T}^+$ by setting
$$M^+:=M/(\bigcap_{\phi\in\Hom_\gg(M,T^\cdot(V))}\Ker\phi)$$ for
$M\in\mathbb T$.

\begin{lemma}\label{lemma1}
Let $M_{gr}$ denote the semisimplification of $M\in\field{T}$. Then $(M_{gr})^+\simeq M^+$.
\end{lemma}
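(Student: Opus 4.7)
The strategy is to prove that both $M^+$ and $(M_{gr})^+$ are canonically isomorphic to $\bigoplus_{\lambda\in\bold{Part}}V_\lambda^{\oplus[M:V_\lambda]}$, where $[M:V_\lambda]$ denotes the multiplicity of $V_\lambda$ as a composition factor of $M$. Since $M$ and its semisimplification $M_{gr}$ share composition factors, this equality of explicit models will yield the desired isomorphism.

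The case of $M_{gr}$ is immediate. Writing $M_{gr}\simeq\bigoplus_{\lambda,\mu}V_{\lambda,\mu}^{\oplus n_{\lambda,\mu}}$, I note that the only simple submodules of $T^\cdot(V)=\bigoplus_\lambda V_\lambda\otimes A^\lambda$ are of the form $V_\lambda=V_{\lambda,\emptyset}$. Hence, by Schur's lemma, every $\gg$-map $V_{\lambda,\mu}\to T^\cdot(V)$ vanishes when $\mu\neq\emptyset$, and the intersection of kernels defining $(M_{gr})^+$ precisely kills the non-$V_\lambda$ summands. This gives $(M_{gr})^+\simeq\bigoplus_\lambda V_\lambda^{\oplus n_{\lambda,\emptyset}}$, with $n_{\lambda,\emptyset}=[M:V_\lambda]$.

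For $M^+$ itself I first show that $M^+\in\mathbb T^+$. Because $M$ has finite length in $\mathbb T$, the descending chain of kernels $\bigcap_{\phi\in F}\Ker\phi$ indexed by finite subsets $F\subset\Hom_\gg(M,T^\cdot(V))$ stabilizes, so $M^+$ is cut out by finitely many maps and embeds into $T^\cdot(V)^{\oplus n}$ for some $n$. Since the simples $V_\lambda$ are injective in $\mathbb T$ by \cite{DPS} and $T^\cdot(V)^{\oplus n}$ is a direct sum of them, this ambient module, and therefore $M^+$, is semisimple with all composition factors of type $V_\lambda$. Next I observe that every $\psi\in\Hom_\gg(M,V_\lambda)$ factors through $M^+$: composing with the inclusion $V_\lambda\hookrightarrow T^\cdot(V)$ exhibits $\psi$ as the restriction of a map $M\to T^\cdot(V)$, whose kernel therefore contains the intersection defining $M^+$. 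Thus $\Hom_\gg(M^+,V_\lambda)\simeq\Hom_\gg(M,V_\lambda)$.

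Finally, injectivity of $V_\lambda$ in $\mathbb T$ gives $\Ext^1_\gg(-,V_\lambda)=0$, and induction along a composition series of $M$ using the long exact Hom sequence yields $\dim\Hom_\gg(M,V_\lambda)=[M:V_\lambda]$. As $M^+$ is already semisimple in $\mathbb T^+$, the multiplicity of $V_\lambda$ as a direct summand of $M^+$ equals $\dim\Hom_\gg(M^+,V_\lambda)=[M:V_\lambda]$, so $M^+\simeq\bigoplus_\lambda V_\lambda^{\oplus[M:V_\lambda]}\simeq(M_{gr})^+$. The mildly delicate step I expect is the finiteness argument for the intersection in the third paragraph, since it is what upgrades $M^+$ from a subobject of an infinite product to a subobject of a finite direct sum and thereby places it in $\mathbb T^+$.
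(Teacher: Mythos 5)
Your argument is correct, but it is organized quite differently from the paper's. You compute both sides explicitly: you identify $(M_{gr})^+$ with $\bigoplus_\lambda V_\lambda^{\oplus[M:V_\lambda]}$ directly from semisimplicity, then show $M^+$ lands in $\field{T}^+$ via the stabilization of the finite intersections of kernels (this step is indeed the one worth spelling out, and your finite-length argument for it is fine), and finally count multiplicities through $\Hom_\gg(M^+,V_\lambda)\simeq\Hom_\gg(M,V_\lambda)$ and $\dim\Hom_\gg(M,V_\lambda)=[M:V_\lambda]$, the latter resting on the injectivity of $V_\lambda$ in $\mathbb T$. The paper instead argues through the radical: since $T^\cdot(V)$ is semisimple, $\rad M\subset\bigcap_\phi\Ker\phi$, so $M^+\simeq(M/\rad M)^+$; and since $V_\lambda$ is simple and injective it cannot occur as a Jordan--H\"older constituent of $\rad M$, whence $(\rad M)^+=((\rad M)_{gr})^+=0$ and $(M_{gr})^+\simeq(M/\rad M)^+\simeq M^+$. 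Both proofs hinge on the injectivity of $V_\lambda$, but the paper's is shorter, while yours buys more: an explicit model $M^+\simeq\bigoplus_\lambda V_\lambda^{\oplus[M:V_\lambda]}$ and a verification that $M^+$ really lies in $\field{T}^+$, which the paper leaves implicit. Two small points of hygiene: your $\Ext^1$-vanishing should be read inside the category $\mathbb T$ (injectivity of $V_\lambda$ is asserted there, and that is all the induction along a composition series needs -- equivalently, just use surjectivity of the restriction maps $\Hom_\gg(M,V_\lambda)\to\Hom_\gg(M',V_\lambda)$), and the equality of the summand multiplicity of $V_\lambda$ in $M^+$ with $\dim\Hom_\gg(M^+,V_\lambda)$ uses $\operatorname{End}_\gg(V_\lambda)=\CC$, which holds here but deserves a word.
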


\begin{proof}
Since $T^\cdot(V)$ is semisimple, $\rad M\subset\bigcap_{\phi\in\Hom_\gg(M,T(V))}\Ker\phi$, 
where $\rad M$ stands for the radical of $M$ considered as a $\gg$-module.
Therefore $M^+\simeq(M/\rad M)^+$. On the other hand, the Jordan-H\"{o}lder multiplicity
of $V_\lambda$ in $\rad M$ is zero for any partition $\lambda$, as $V_\lambda$ is injective
and simple. Therefore $(\rad M)^+=((\rad M)_{gr})^+=0$. This shows that 
$$(M_{gr})^+\simeq((M/\rad M)\oplus(\rad M)_{gr})^+\simeq(M/\rad M)^+\oplus((\rad M)_{gr})^+ = (M/\rad M)^+\simeq M^+.$$
\end{proof}

\begin{corollary}\label{cor2}
$(\;)^+:\field T\to \field T^+$ is an exact functor.
\end{corollary}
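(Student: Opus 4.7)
\emph{Proof plan.} Write $K_M := \bigcap_{\phi \in \Hom_\gg(M, T^\cdot(V))} \Ker \phi$, so that $M^+ = M/K_M$. Given a short exact sequence $0 \to A \xrightarrow{\iota} B \xrightarrow{\pi} C \to 0$ in $\mathbb{T}$, the goal is to show that $0 \to A^+ \to B^+ \to C^+ \to 0$ is exact in $\mathbb{T}^+$. The plan is to obtain right exactness formally via an adjunction, and then to establish injectivity at $A^+$ separately, using that each $V_\lambda$ is injective in the ambient category $\mathbb{T}$.

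First I would verify that $(\;)^+$ is left adjoint to the inclusion $\mathbb{T}^+ \hookrightarrow \mathbb{T}$. Given $f : M \to N$ with $N \in \mathbb{T}^+$, the finite-length module $N$ admits an embedding $N \hookrightarrow T^\cdot(V)^{\oplus n}$, since by Schur--Weyl each $V_\lambda$ is a direct summand of $V^{\otimes |\lambda|}$ and hence embeds into $T^\cdot(V)$. Composing with $f$ produces $n$ maps $M \to T^\cdot(V)$, each of which vanishes on $K_M$; hence $f$ itself vanishes on $K_M$ and factors uniquely through $M^+$. As a left adjoint, $(\;)^+$ is right exact, which already yields surjectivity of $\pi^+ : B^+ \to C^+$, exactness at $B^+$, and in particular the vanishing of the composition $\pi^+ \iota^+$.

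It remains to prove that $\iota^+ : A^+ \to B^+$ is injective, which amounts to $K_B \cap A = K_A$. The inclusion $K_A \subset K_B \cap A$ is automatic, since each $\psi : B \to T^\cdot(V)$ restricts to a map $A \to T^\cdot(V)$ killing $K_A$. For the reverse inclusion, it suffices to show that every $\phi : A \to T^\cdot(V)$ extends along $\iota$ to a map $B \to T^\cdot(V)$. Because $A$ has finite length and the $\gg$-action preserves each homogeneous component $V^{\otimes m}$, the image $\phi(A)$ is contained in a finite sub-direct-sum $I := \bigoplus_{m=0}^N V^{\otimes m}$ of $T^\cdot(V)$. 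Each $V^{\otimes m}$ is a finite direct sum of simple modules $V_\lambda$, all injective in $\mathbb{T}$ by \cite{DPS}, so $I$ is itself injective in $\mathbb{T}$. Therefore $\phi : A \to I$ extends to $\tilde\phi : B \to I \hookrightarrow T^\cdot(V)$, as required.

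The main obstacle is precisely this injectivity step. Right exactness comes essentially for free once the adjunction is identified, but the injectivity of $\iota^+$ depends on the substantive input that every simple $V_\lambda$ remains injective in the non-semisimple category $\mathbb{T}$; without this feature, maps from $A$ into $T^\cdot(V)$ would not extend to $B$ and the subspaces $K_\bullet$ would fail to be compatible with the inclusion $A \hookrightarrow B$.
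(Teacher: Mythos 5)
Your proof is correct, but it follows a different route from the paper's. The paper derives the corollary from Lemma \ref{lemma1}: since $M^+\simeq (M_{gr})^+$, the functor $(\;)^+$ is determined by composition factors, and exactness of $0\to A^+\to B^+\to C^+\to 0$ then follows from the additivity of Jordan--H\"older multiplicities (together with the easy surjectivity of $\pi^+$ and a length count identifying $\operatorname{Im}\iota^+$ with $\Ker\pi^+$). You instead argue directly: you first establish that $(\;)^+$ is left adjoint to the inclusion $\mathbb{T}^+\hookrightarrow\mathbb{T}$, which formally gives right exactness, and then prove injectivity of $A^+\to B^+$ by extending any $\phi\colon A\to T^\cdot(V)$ to $B$, using that $\phi(A)$ lands in a finite truncation $\bigoplus_{m\le N}V^{\otimes m}$, which is injective in $\mathbb{T}$ because each $V_\lambda$ is. Both arguments ultimately rest on the same non-formal input --- injectivity of the objects of $\mathbb{T}^+$ in $\mathbb{T}$ (the paper uses it inside the proof of Lemma \ref{lemma1} to kill the radical, and again for Lemma \ref{lemma4}(c)) --- but your version buys the universal property of $(\;)^+$ as a reflection onto $\mathbb{T}^+$, which the paper in fact uses implicitly later (the isomorphism (\ref{eqadj1}) in the proof of Lemma \ref{adjoint}), and it avoids the semisimplification lemma and the length-counting step; the paper's version is shorter given Lemma \ref{lemma1}, which it needs anyway for Lemma \ref{lemma3} and Lemma \ref{lemma4}. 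One small polish point: the containment of $\phi(A)$ in a finite truncation follows from $A$ being of finite length, hence finitely generated, so that finitely many generators have components in only finitely many $V^{\otimes m}$; the grading-preservation remark alone is not the operative reason.
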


\section{The functors $\mathcal D_N$ and $\mathcal T_N$}

Let $N\in\mathbb T$, then $\mathcal T_N(\cdot):=(N\otimes \cdot)^+$ is an exact functor $\mathbb T^+\to\mathbb T^+$. 
If $\mathcal{E}nd_l(\mathbb T^+)$ denotes the category of all linear
endofunctors of $\mathbb T^+$ (i.e. all functors from $\TT^+$ to $\TT^+$ which induce linear operators on Homs), 
then $\mathcal T:\TT\to\mathcal{E}nd_l(\TT^+)$ is a faithful functor.
In particular, for any $M,N\in\TT$, a morphism $\varphi\in\Hom_\gg(M,N)$ induces a morphism of functors 
$$\mathcal T_{\varphi}:\mathcal T_M\to\mathcal T_N,$$
$$\mathcal T_{\varphi}(X):=(\varphi\otimes\id)^+:(M\otimes X)^+\to(N\otimes X)^+$$ 
for $X\in\TT^+$.
In particular, any exact sequence in $\TT$ 
$$0\to N\to M \to L\to 0$$
induces an exact sequence of linear endofunctors on $\TT^+$
$$0\to \mathcal T_N\to \mathcal T_M \to \mathcal T_L\to 0.$$

In what follows we use the notation $\mathcal D_N:=\mathcal T_{N_*}$.

\begin{lemma}\label{adjoint} If $N\in\TT^+$, then $\mathcal D_N$ is left adjoint to $\mathcal T_N$.
\end{lemma}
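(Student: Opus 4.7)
The plan is to use the universal property of the projection functor $(\cdot)^+$ to reduce the adjunction to a natural $\Hom$-identity in the ambient category $\mathbb T$, and then to exhibit that identity via the canonical evaluation pairing, checking bijectivity on the simple objects of the semisimple category $\mathbb T^+$.

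First I observe that $(\cdot)^+:\mathbb T\to\mathbb T^+$ is left adjoint to the inclusion $\mathbb T^+\hookrightarrow\mathbb T$: every $Y\in\mathbb T^+$ is a direct sum of simple modules $V_\lambda$, each of which embeds into $T^\cdot(V)$, so $Y$ embeds into a direct sum of copies of $T^\cdot(V)$, and the defining universal property of $M^+$ then gives $\Hom_{\mathbb T^+}(X^+,Y)\simeq\Hom_{\mathbb T}(X,Y)$. Since $N\otimes L\in\mathbb T^+$ for $N,L\in\mathbb T^+$ (so $\mathcal T_N L=N\otimes L$), this yields
$$\Hom_{\mathbb T^+}(\mathcal D_N M,L)=\Hom_{\mathbb T}(N_*\otimes M,L),\qquad \Hom_{\mathbb T^+}(M,\mathcal T_N L)=\Hom_{\mathbb T}(M,N\otimes L),$$
so it suffices to produce a natural bijection $\Hom_{\mathbb T}(M,N\otimes L)\simeq\Hom_{\mathbb T}(N_*\otimes M,L)$ for $M,L\in\mathbb T^+$.

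Using the canonical evaluation pairing $\mathrm{ev}_N:N_*\otimes N\to\mathbb C$, I define the comparison map
$$\Phi:\Hom_{\mathbb T}(M,N\otimes L)\to\Hom_{\mathbb T}(N_*\otimes M,L),\qquad g\mapsto(\mathrm{ev}_N\otimes\id_L)\circ(\id_{N_*}\otimes g),$$
which is natural in $M$ and $L$ by construction. To prove it is bijective, I exploit the semisimplicity of $\mathbb T^+$ and the additivity of $\mathcal T_N,\mathcal D_N$ to reduce to the simple-object case $M=V_\mu$, $N=V_\pi$, $L=V_\nu$. On the source side $\dim\Hom(V_\mu,V_\pi\otimes V_\nu)=c^\mu_{\pi,\nu}$, the Littlewood--Richardson coefficient. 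On the target side, the preceding reduction together with \refle{lemma1} identifies $\dim\Hom_{\mathbb T}((V_*)_\pi\otimes V_\mu,V_\nu)$ with the Jordan--H\"older multiplicity of $V_{\nu,\emptyset}$ in the semisimplification of $V_\mu\otimes (V_*)_\pi$, and formula (\ref{socmult}) computes this as $\sum_\gamma N^\mu_{\nu,\gamma}N^\pi_{\emptyset,\gamma}=N^\mu_{\nu,\pi}=c^\mu_{\pi,\nu}$, using $N^\pi_{\emptyset,\gamma}=\delta_{\pi,\gamma}$. Injectivity of $\Phi$ is automatic from the non-degeneracy of $\mathrm{ev}_N$: if $\Phi(g)=0$ and we write $g(m)=\sum_i a_i\otimes b_i$ with $b_i$ linearly independent in $V_\nu$, then $\mathrm{ev}_N(v_*,a_i)=0$ for all $v_*\in(V_*)_\pi$, forcing $a_i=0$. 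Combined with the dimension match this yields bijectivity. The main obstacle is the dimension computation on the target side, which essentially boils down to \refle{lemma1} translating $\Hom$ in the non-semisimple $\mathbb T$ into a multiplicity in the semisimplification, and then to the socle-filtration formula (\ref{socmult}) to match the Littlewood--Richardson coefficient.
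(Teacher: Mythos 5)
Your proof is correct and follows essentially the same route as the paper's: reduce via the universal property of $(\cdot)^+$ to a $\Hom$-identity in $\mathbb T$, define the comparison map by the evaluation pairing (the paper's $\gamma$), note its injectivity, and match dimensions on simple objects using (\ref{socmult}) and Littlewood--Richardson coefficients. The only difference is that you spell out details the paper leaves implicit, such as the justification of $\Hom_\gg(N_*\otimes X,Y)\simeq\Hom_\gg((N_*\otimes X)^+,Y)$ and the identity $N^\pi_{\emptyset,\gamma}=\delta_{\pi,\gamma}$.
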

\begin{proof} We have to construct a canonical isomorphism
\begin{equation}\label{eqadj2}
\Hom_\gg((N_*\otimes X)^+,Y))\xrightarrow{\sim} \Hom_\gg(X,N\otimes Y)
\end{equation}
for any $X,Y\in\TT^+$. 
First, from the definition of ${\cdot}^+$ we have a canonical isomorphsim
\begin{equation}\label{eqadj1}
\Hom_\gg(N_*\otimes X,Y))\xrightarrow{\sim}\Hom_\gg((N_*\otimes X)^+,Y).
\end{equation}
Next, define a morphsim 
$$\gamma: \Hom_\gg(X,N\otimes Y)\to \Hom_\gg(N_*\otimes X,Y)$$
by setting $\gamma(\varphi)(m\otimes x)=\sum_i \langle n_i,m\rangle y_i$
if $\varphi(x)=\sum_i n_i\otimes y_i$.
By construction $\gamma$ is injective. It remains to show that
\begin{equation}\label{eqadj}
\dim\Hom_\gg(X,N\otimes Y)= \dim\Hom_\gg(N_*\otimes X,Y).
\end{equation}
Since $\TT^+$ is semisimple, it suffices to check (\ref{eqadj}) for
simple $N=V_\mu$, $X=V_\lambda$ and $Y=V_\nu$. By the very definition of the Littlewood-Richardson coefficients, the left-hand side equals
$N^{\lambda}_{\mu,\nu}$. The equality (\ref{socmult}) implies that the right-hand side is given by the same Littlewood-Richardson coefficient.
Therefore $\gamma$ is an isomorphism and the desired isomorphism (\ref{eqadj2}) is the composition of (\ref{eqadj1}) with $\gamma^{-1}$.
\end{proof}

\begin{corollary}\label{coradj} If $N\in\TT^+$, then $[\mathcal D_N]$ and $[\mathcal T_N]$ are mutually dual operators on $\bold{H}^+$.
\end{corollary}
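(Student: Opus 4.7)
The plan is to reduce the statement to the adjointness already established in Lemma \ref{adjoint} by identifying the natural inner product on $\bold{H}^+$ with a $\Hom$-pairing on $\mathcal K(\TT^+)$ through the isomorphism $\operatorname{ch}$.

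First, I would fix the bilinear form on $\mathcal{K}(\TT^+)$ given on classes of simple objects by $\langle [V_\lambda],[V_\mu]\rangle := \dim\Hom_\gg(V_\lambda,V_\mu) = \delta_{\lambda\mu}$ and extended bilinearly. Since $\TT^+$ is semisimple with simples satisfying Schur's lemma (with $\operatorname{End}_\gg(V_\lambda)=\mathbb C$), this pairing coincides with $\langle [M],[M']\rangle = \dim\Hom_\gg(M,M')$ for all $M,M'\in\TT^+$. By Lemma \ref{grfock} the isomorphism $\operatorname{ch}$ sends $[V_\lambda]\mapsto s_\lambda$, so this bilinear form is carried to the natural inner product on $\bold{H}^+$, for which $\{s_\lambda\}_{\lambda\in\bold{Part}}$ is orthonormal (as recalled in section 2).

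Next, I would apply Lemma \ref{adjoint}: for any $N\in\TT^+$ and any simple objects $X=V_\lambda$, $Y=V_\mu$ of $\TT^+$, the canonical isomorphism
$$\Hom_\gg(\mathcal D_N V_\lambda,V_\mu) \xrightarrow{\sim} \Hom_\gg(V_\lambda,\mathcal T_N V_\mu)$$
yields the equality of dimensions
$$\dim\Hom_\gg(\mathcal D_N V_\lambda, V_\mu) = \dim\Hom_\gg(V_\lambda, \mathcal T_N V_\mu).$$
Both sides make sense because $\mathcal T_N V_\mu = (N\otimes V_\mu)^+$ and $\mathcal D_N V_\lambda = (N_*\otimes V_\lambda)^+$ lie in $\TT^+$ and are of finite length (by the Littlewood--Richardson rule applied to the finite decomposition of $N$). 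Translating both sides via the $\Hom$-pairing of the first step gives
$$\langle [\mathcal D_N V_\lambda],[V_\mu]\rangle = \langle [V_\lambda],[\mathcal T_N V_\mu]\rangle.$$

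Finally, since $\{[V_\lambda]\}$ is a basis of $\mathcal K(\TT^+)$, extending bilinearly yields
$$\langle [\mathcal D_N]\xi,\eta\rangle = \langle \xi,[\mathcal T_N]\eta\rangle$$
for all $\xi,\eta\in\mathcal K(\TT^+)$, and transporting via $\operatorname{ch}$ gives the same identity on $\bold{H}^+$. This is precisely the assertion that $[\mathcal D_N]$ and $[\mathcal T_N]$ are mutually dual operators. There is no real obstacle: the only point requiring verification is the identification of the Schur inner product with the Hom-pairing, which is immediate from semisimplicity of $\TT^+$ together with Lemma \ref{grfock}.
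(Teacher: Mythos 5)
Your proof is correct and follows exactly the route the paper intends: the corollary is stated as an immediate consequence of Lemma \ref{adjoint}, with the Hom-pairing on the semisimple category $\TT^+$ identified via $\operatorname{ch}$ with the inner product on $\bold{H}^+$ for which $\{s_\lambda\}$ is orthonormal. Your write-up merely makes these standard identifications explicit, so there is nothing to add.
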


It is clear that for all $L,N\in\TT^+$ we have isomorphisms of functors
$$\mathcal T_N\circ\mathcal T_L\simeq \mathcal T_{N\otimes L}$$
and 
$$\mathcal D_L\circ \mathcal T_N\simeq \mathcal T_{L_*\otimes N}.$$

\begin{lemma}\label{lemma3} For $L,N\in\mathbb T^+$ there is an isomorphism of functors 
$\mathcal D_N\circ \mathcal D_L\simeq \mathcal D_{N\otimes L}$.
\end{lemma}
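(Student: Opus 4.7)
The plan is to deduce the isomorphism abstractly from the uniqueness of adjoint functors, rather than manipulate the definition $(\cdot)^+$ directly.

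By Lemma \ref{adjoint}, for $N, L \in \mathbb{T}^+$ we have adjunctions $\mathcal{D}_N \dashv \mathcal{T}_N$ and $\mathcal{D}_L \dashv \mathcal{T}_L$. Composing these yields that $\mathcal{D}_N \circ \mathcal{D}_L$ is left adjoint to $\mathcal{T}_L \circ \mathcal{T}_N$. Using the isomorphism $\mathcal{T}_L \circ \mathcal{T}_N \simeq \mathcal{T}_{L \otimes N}$ recorded just before Lemma \ref{lemma3}, together with the canonical symmetry isomorphism $L \otimes N \simeq N \otimes L$ of $\gg$-modules, this gives that $\mathcal{D}_N \circ \mathcal{D}_L$ is left adjoint to $\mathcal{T}_{N \otimes L}$. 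Since the Littlewood--Richardson rule keeps polynomial modules polynomial, $\mathbb{T}^+$ is closed under tensor products, so $N \otimes L \in \mathbb{T}^+$ and Lemma \ref{adjoint} applies again: $\mathcal{D}_{N \otimes L}$ is itself left adjoint to $\mathcal{T}_{N \otimes L}$. By uniqueness of left adjoints up to canonical natural isomorphism, we conclude $\mathcal{D}_N \circ \mathcal{D}_L \simeq \mathcal{D}_{N \otimes L}$.

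A direct alternative would unfold the definitions: apply the exact functor $(\cdot)^+$ (Corollary \ref{cor2}) to the short exact sequence obtained by tensoring $N_*$ with $0 \to K \to L_* \otimes X \to (L_* \otimes X)^+ \to 0$, where $K = \ker(L_* \otimes X \to (L_* \otimes X)^+)$, and then verify that $(N_* \otimes K)^+ = 0$. Via Lemma \ref{lemma1} this reduces to checking that every simple subquotient $V_{\lambda,\mu}$ of $(N_* \otimes K)_{gr}$ has $\mu \neq \emptyset$, which follows because the semisimplification of $K$ has only such subquotients and $N_*$ has only subquotients of the form $V_{\emptyset, \nu}$. This Littlewood--Richardson bookkeeping would be the main technical step of the direct proof, but the adjunction route sidesteps it entirely.
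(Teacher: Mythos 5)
Your main argument is correct, and it takes a genuinely different route from the paper. The paper proves the lemma directly: for $M\in\mathbb T^+$ it tensors the exact sequence $0\to K\to L_*\otimes M\to (L_*\otimes M)^+\to 0$ with $N_*$, applies the exact functor $(\cdot)^+$ (Corollary \ref{cor2}) and shows $(N_*\otimes K)^+=0$ via Lemma \ref{lemma1}, which gives the natural isomorphism $(N_*\otimes L_*\otimes M)^+\simeq (N_*\otimes (L_*\otimes M)^+)^+$ --- precisely the ``direct alternative'' you sketch at the end. Your primary route instead composes the adjunctions $\mathcal D_N\dashv\mathcal T_N$ and $\mathcal D_L\dashv\mathcal T_L$ from Lemma \ref{adjoint}, identifies the right adjoint $\mathcal T_L\circ\mathcal T_N\simeq\mathcal T_{L\otimes N}\simeq\mathcal T_{N\otimes L}$, notes that $N\otimes L\in\mathbb T^+$ so that Lemma \ref{adjoint} applies to $\mathcal D_{N\otimes L}$ as well, and concludes by uniqueness of left adjoints. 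This is shorter and bypasses the vanishing argument entirely; what it costs is full reliance on Lemma \ref{adjoint} being an honest adjunction, natural in both variables (the paper's proof of that lemma constructs the canonical map $\gamma$ and checks bijectivity by a Littlewood--Richardson dimension count using semisimplicity of $\mathbb T^+$, with naturality left implicit), whereas the paper's proof stays at the level of the projection functor and produces the isomorphism concretely. One caveat on your sketched direct alternative: the claim that $(N_*\otimes K)^+=0$ ``follows because'' the constituents of $K_{gr}$ are $V_{\lambda,\mu}$ with $\mu\neq\emptyset$ while those of $N_*$ are $V_{\emptyset,\nu}$ still requires the nontrivial (but true) fact that $V_{\emptyset,\nu}\otimes V_{\lambda,\mu}$ with $\mu\neq\emptyset$ has no constituents of the form $V_{\alpha,\emptyset}$; the paper is equally terse at this point, and since your main argument does not use this step, it is not a gap in your proof.
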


\begin{proof}
Consider the natural exact sequence
$$0\rightarrow K\rightarrow L_*\otimes M\rightarrow(L_*\otimes M)^+\rightarrow 0,$$
where $M\in\mathbb T^+$.
It implies the exact sequence
$$0\rightarrow N_*\otimes K\rightarrow N_*\otimes L_*\otimes M\rightarrow N_*\otimes(L_*\otimes M)^+\rightarrow 0.$$
By \refle{lemma1} $K^+=0$ and therefore, again by \refle{lemma1}, $(N_*\otimes K)^+=0$.
Since $(\cdot)^+$ is an exact functor, we obtain a natural isomorphism
$$ \mathcal D_{N\otimes L}(M)=(N_*\otimes L_*\otimes M)^+\simeq(N_*\otimes(L_*\otimes M)^+)^+=\mathcal D_N\circ \mathcal D_L(M).$$
\end{proof}

Under a direct sum of linear operators $L_i: A\rightarrow B_i$ we understand the operator
$\oplus L_i:A\rightarrow\bigoplus_i B_i.$

\begin{lemma}\label{lemma4}
\begin{enumerate}[(a)]
 \item $(T^{m,n})^+=0$ if $m<n$.
 \item If $m\geq n$, then $(T^{m,n})^+\simeq\Im\left(\bigoplus_{i_1...i_n}\Phi_{i_1...i_n|1...n}\right)$.
 \item If $M$ is a submodule of $T^{m,n}$, then $M^+\simeq\left(\bigoplus_{i_1...i_n}\Phi_{i_1...i_n|1...n}\right)(M)$.
\end{enumerate}
\end{lemma}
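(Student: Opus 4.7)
The plan is to reduce the lemma to a computation of the intersection $\bigcap_{\phi\in\Hom_\gg(M,T^\cdot(V))}\Ker\phi$ appearing in the definition of $M^+$, and to identify it with $\Ker(\Psi|_M)$, where $\Psi:=\bigoplus_{(i_1,\dots,i_n)}\Phi_{i_1\dots i_n|1\dots n}$. Two ingredients are needed: a grading argument on $\TT$, and a Schur--Weyl-type description of Hom spaces for tensor modules.

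We first equip $\TT$ with the $\ZZ$-valued function $d(V_{\lambda,\mu}):=|\lambda|-|\mu|$ on simple objects. Formula \refeq{socmult} shows that every simple subquotient of the indecomposable injective $V_\lambda\otimes(V_*)_\mu$ has the same value $d=|\lambda|-|\mu|$, so $d$ is preserved by every morphism in $\TT$. In particular every simple constituent of $T^{m,n}$, and of any submodule thereof, has $d$-degree $m-n$, whereas every simple constituent of $T^\cdot(V)=\bigoplus_r T^r(V)$ has $d$-degree $r\geq 0$. When $m<n$ this forces $\Hom_\gg(T^{m,n},T^\cdot(V))=0$, so $\bigcap_\phi\Ker\phi = T^{m,n}$ and $(T^{m,n})^+=0$, proving (a). For $m\geq n$, the same argument shows that any $\phi\in\Hom_\gg(M,T^\cdot(V))$ with $M\subset T^{m,n}$ factors through the direct summand $T^{m-n}(V)$ of $T^\cdot(V)$.

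The key classical input is that $\Hom_\gg(T^{m,n},T^{m-n}(V))$ has, as a basis, the morphisms $\sigma\circ\Phi_{i_1\dots i_n|1\dots n}$, where $(i_1,\dots,i_n)$ ranges over ordered $n$-tuples of distinct elements of $\{1,\dots,m\}$ and $\sigma\in S_{m-n}$ permutes the remaining $V$-factors; this is the standard Schur--Weyl description of Hom spaces for finitary tensor modules, and a direct count yields dimension $m!$. Since each $\sigma$ is an isomorphism, $\Ker(\sigma\circ\Phi_{i_1\dots i_n|1\dots n})=\Ker\Phi_{i_1\dots i_n|1\dots n}$, so
$$\bigcap_{\phi}\Ker\phi \;=\; \bigcap_{(i_1,\dots,i_n)}\Ker\Phi_{i_1\dots i_n|1\dots n} \;=\; \Ker\Psi.$$
For $M=T^{m,n}$ this yields $(T^{m,n})^+\simeq T^{m,n}/\Ker\Psi\simeq\Im\Psi$, proving (b).

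For part (c) with $m\geq n$, we observe that $T^{m-n}(V)=\bigoplus_{|\lambda|=m-n}V_\lambda\otimes A^\lambda$ is a finite direct sum of simple injectives, hence injective in $\TT$. Any $\phi\colon M\to T^{m-n}(V)$ therefore extends to some $\tilde\phi\colon T^{m,n}\to T^{m-n}(V)$, which by the previous paragraph is a linear combination of maps $\sigma\circ\Phi_{i_1\dots i_n|1\dots n}$. Restricting back to $M$ and using that the $\sigma$ preserve kernels, we conclude $\bigcap_\phi\Ker\phi=\Ker(\Psi|_M)$, whence $M^+\simeq\Im(\Psi|_M)=\Psi(M)$. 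When $m<n$, the exactness of $(\cdot)^+$ from \refcor{cor2} together with part (a) forces $M^+\hookrightarrow(T^{m,n})^+=0$, consistent with the empty sum $\Psi=0$. The only step not following directly from results already in the excerpt is the Schur--Weyl description of $\Hom_\gg(T^{m,n},T^{m-n}(V))$; this is the main technical point, and it can be verified by reducing to the finite-rank case via the directed-limit presentation of $V$ and $V_*$.
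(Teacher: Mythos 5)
Your argument reaches the right conclusions, and parts (a) and (c) are essentially the paper's proof in different words: your degree function $d(V_{\lambda,\mu})=|\lambda|-|\mu|$, constant on each injective $V_\lambda\otimes(V_*)_\mu$ by \refeq{socmult}, repackages the paper's observation that $T^{m,n}$ with $m<n$ has no simple constituents of the form $V_\lambda$ (which together with Lemma \ref{lemma1} gives (a)), and your extension of $\phi$ through the injective module $T^{m-n}(V)$ in (c) is the same device the paper applies with $T^\cdot(V)$. The genuine divergence is in (b): the paper gets it for free from the socle-filtration formula \refeq{socker} quoted from \cite{PStyr}, which identifies $\bigcap_{i_1\dots i_n}\Ker\Phi_{i_1\dots i_n|1\dots n}$ as the largest submodule without constituents $V_\lambda$, while you instead invoke a Schur--Weyl description of $\Hom_\gg(T^{m,n},T^{m-n}(V))$ by contraction-followed-by-permutation maps. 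That claim is true, but it is the crux of your proof and is only asserted; the suggested verification by passing to finite rank is not immediate, since the restriction of $T^{m,n}$ to $gl(N)\subset gl(\infty)$ is not the finite-rank mixed tensor module. You can close this within the paper's toolkit: injectivity of each $V_\lambda$ gives $\dim\Hom_\gg(T^{m,n},T^{m-n}(V))=\sum_{|\lambda|=m-n}\dim A^\lambda\,[T^{m,n}:V_\lambda]=m!$ by \refeq{socmult} and the branching rule for $S_{m-n}\times S_n\subset S_m$, and the $m!$ maps $\sigma\circ\Phi_{i_1\dots i_n|1\dots n}$ are readily seen to be linearly independent, hence a basis. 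With that supplied, your route is sound and even yields a bit more (an explicit description of all morphisms $T^{m,n}\to T^\cdot(V)$), at the cost of a heavier input than the paper needs.
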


\begin{proof}
By (\ref{socmult}) all simple constituents of $\soc^{k+1}(T^{m,n})/(\soc^{k}(T^{m,n}))$ are isomorphic to
$V_{\lambda,\mu}$ with $|\lambda|=n-k$, $|\mu|=m-k$. Therefore (a) follows from
\refle{lemma1}. 

In the case $n\geq m$ we have $(T^{m,n})^+\simeq T^{m,n}/(\soc^{n+1}(T^{m,n}))$.
Since $\soc^{n+1}(T^{m,n})=\bigcap_{i_1...i_n}\Ker(\Phi_{i_1...i_n|1...n})$, statement (b) follows. 

To prove (c) note that $T^\cdot(V)$ is an injective object of $\mathbb T$. Therefore 
any $\phi\in\Hom_\gg(M,T^\cdot(V))$ extends to $\tilde{\phi}\in\Hom_\gg(T^{m,n},T^\cdot(V))$,
$\tilde{\phi}|_M=\phi$. This shows that $\displaystyle M^+\simeq M/(M\cap(\bigcap_{i_1\dots i_n}\Ker\Phi_{i_1...i_n|1...n}))$,
and (c) follows.
\end{proof}

\begin{prop}\label{prop1} For any $L,N\in\TT^+$ there is an isomorphism of functors
$$\mathcal T_{\soc(L_*\otimes N)}\simeq  \mathcal T_N\circ\mathcal D_L.$$
\end{prop}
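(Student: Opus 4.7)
The plan is to construct a natural transformation $\alpha:\mathcal T_{\soc(L_*\otimes N)}\Rightarrow\mathcal T_N\circ\mathcal D_L$, reduce to the case of simple modules, and verify that $\alpha_M$ is an isomorphism by matching Grothendieck classes via Littlewood--Richardson coefficients.

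First I would construct $\alpha$ as follows. For $M\in\TT^+$, the inclusion $\soc(L_*\otimes N)\hookrightarrow L_*\otimes N$ and the canonical surjection $L_*\otimes M\twoheadrightarrow(L_*\otimes M)^+$ combine into the composition
$$\soc(L_*\otimes N)\otimes M\hookrightarrow L_*\otimes N\otimes M\simeq N\otimes L_*\otimes M\twoheadrightarrow N\otimes(L_*\otimes M)^+.$$
Applying the exact functor $(\cdot)^+$ (\refcor{cor2}), and using that $N\otimes(L_*\otimes M)^+$ already lies in $\TT^+$, yields the natural morphism $\alpha_M:(\soc(L_*\otimes N)\otimes M)^+\to\mathcal T_N\mathcal D_L(M)$.

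Next, since $\TT^+$ is semisimple and both functors are additive and exact in each argument, I reduce to the case $L=V_\mu$, $N=V_\lambda$, $M=V_\rho$ simple, where $\soc(L_*\otimes N)=V_{\lambda,\mu}$ by \refeq{socmult}. For the right-hand side, \refcor{coradj} combined with Littlewood--Richardson (so that $[\mathcal D_{V_\mu}]$ acts as the skew Schur operator adjoint to multiplication by $s_\mu$) gives
$$[\mathcal T_{V_\lambda}\mathcal D_{V_\mu}(V_\rho)]=\sum_{\beta,\tau}N^\rho_{\mu,\beta}N^\tau_{\lambda,\beta}[V_\tau].$$
For the left-hand side, \refle{lemma4}(c) realizes $(V_{\lambda,\mu}\otimes V_\rho)^+$ as the image of the total contraction $\bigoplus_I\Phi_{I|1\dots|\mu|}$ applied to $V_{\lambda,\mu}\otimes V_\rho\subset V_\lambda\otimes(V_*)_\mu\otimes V_\rho$. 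The key observation is that, by \refeq{socker}, $V_{\lambda,\mu}$ is the common kernel of all cross-contractions $\Phi_{i|j}$ between $V_\lambda$ and $(V_*)_\mu$ indices; hence any $\Phi_{I|J}$ involving a $V_\lambda$-index factors through such a cross-contraction and vanishes on $V_{\lambda,\mu}\otimes V_\rho$. Only those $\Phi_{I|J}$ with $I$ entirely contained in the $V_\rho$-range survive, and these factor as $\id_{V_\lambda}\otimes\pi$ with $\pi:(V_*)_\mu\otimes V_\rho\twoheadrightarrow((V_*)_\mu\otimes V_\rho)^+=\mathcal D_{V_\mu}(V_\rho)$. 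This embeds $(V_{\lambda,\mu}\otimes V_\rho)^+$ as a submodule of $V_\lambda\otimes\mathcal D_{V_\mu}(V_\rho)$, and a check of definitions identifies this inclusion with $\alpha_{V_\rho}$.

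Finally I compare characters. Using the exact sequence $0\to V_{\lambda,\mu}\otimes V_\rho\to V_\lambda\otimes(V_*)_\mu\otimes V_\rho\to Q\otimes V_\rho\to 0$, where $Q=V_\lambda\otimes(V_*)_\mu/V_{\lambda,\mu}$, together with \refeq{socmult} to compute the polynomial multiplicities in both $(V_\lambda\otimes(V_*)_\mu\otimes V_\rho)^+$ and $(Q\otimes V_\rho)^+$, one obtains $[(V_{\lambda,\mu}\otimes V_\rho)^+]=\sum_{\beta,\tau}N^\rho_{\mu,\beta}N^\tau_{\lambda,\beta}[V_\tau]$, matching the right-hand side. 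Since $\alpha_{V_\rho}$ is thus an injection between semisimple objects of $\TT^+$ with equal Grothendieck classes, it is an isomorphism. The main obstacle is precisely this LHS character computation: \refeq{socmult} describes $V_\lambda\otimes(V_*)_\mu$ only as a whole, not its simple socle $V_{\lambda,\mu}$ alone, so one must combine the short exact sequence above with the socle multiplicity formula to reduce the equality to a non-trivial Littlewood--Richardson identity.
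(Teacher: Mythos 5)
Your construction of the natural transformation $\alpha$ and the observation that all contractions touching a $V_\lambda$-index kill $\soc(L_*\otimes N)\otimes M$ are sound, and up to that point you run parallel to the paper: the paper also reduces, via Lemma \ref{lemma4}, to comparing the image of the surviving contractions on $\soc(L_*\otimes N)\otimes M$ with their image on all of $L_*\otimes N\otimes M$. The divergence is in how surjectivity is obtained. The paper proves the equality of images (its identity (\ref{contrid})) directly, by the explicit linear-algebra device: for a vector $u=v_1\otimes\dots\otimes v_l\otimes w_1\otimes\dots\otimes w_n\otimes x_1\otimes\dots\otimes x_m$ with $\span(w_i)\cap\span(x_j)=\{0\}$ one replaces the $v_i$ by $\tilde v_i$ vanishing on the $w$'s but with the same pairings against the $x$'s, producing an element of $\soc(T^{l,n})\otimes V^{\otimes m}$ with the same contractions; no character computation is needed. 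You instead propose to prove injectivity and then conclude by matching Grothendieck classes in the semisimple category $\TT^+$.

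That replacement is where the genuine gap lies, and it is exactly the step you flag but do not resolve. The class identity $[(V_{\lambda,\mu}\otimes V_\rho)^+]=\sum_{\beta,\tau}N^\rho_{\mu,\beta}N^\tau_{\lambda,\beta}[V_\tau]$ is (on simples) precisely the Grothendieck-level shadow of the proposition itself, so it cannot be quoted; and your proposed derivation does not close: (\ref{socmult}) gives the Jordan--H\"older constituents $V_{\lambda',\mu'}$ of $Q=V_\lambda\otimes(V_*)_\mu/V_{\lambda,\mu}$, but to compute $[(Q\otimes V_\rho)^+]$ you then need the polynomial multiplicities of $V_{\lambda',\mu'}\otimes V_\rho$ --- the same kind of quantity you are trying to determine --- so as written the computation is circular. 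It can be repaired by an induction on $|\mu|$ (the constituents of $Q$ have strictly smaller second partition), after which the statement reduces to the combinatorial identity
$$\sum_\sigma N^\sigma_{\lambda,\rho}N^\sigma_{\tau,\mu}\;=\;\sum_{\gamma,\beta,\lambda',\mu'}N^\lambda_{\lambda',\gamma}\,N^\mu_{\mu',\gamma}\,N^\tau_{\lambda',\beta}\,N^\rho_{\mu',\beta},$$
a genuine Littlewood--Richardson (Hopf-algebra/skew Schur) identity that you neither state precisely nor prove. There is also a smaller point to nail down: for an isomorphism of functors you must know that the contraction-realized injection is your natural map $\alpha_{V_\rho}$ (or argue naturality separately); you assert this as ``a check of definitions,'' which is believable but should be written out. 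Until the induction and the identity above are supplied, the argument is incomplete; the paper's choice of the vectors $\tilde v_i$ is precisely the device that avoids this combinatorial detour.
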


\begin{proof} One has to prove that
for any $M\in\field{T}^+$ there exists a canonical isomorphsim
$$(\soc( L_*\otimes N)\otimes M)^+\simeq  \mathcal T_N\circ\mathcal D_L(M).$$
Without loss of generality we may assume that $L\subset V^{\otimes  l}$, $M\subset V^{\otimes m}$, $N\subset V^{\otimes n}$.
By \refle{lemma4}, 
$$(\soc(L_*\otimes N)\otimes M)^+=\left(\bigoplus_{1\leq j_1,...,j_l\leq m+n}\Phi_{1...l|j_1...j_l}\right)(\soc(L_*\otimes N)\otimes M).$$

If $j_i\leq n$ at least for one $j_i$, then $\Phi_{1...l|j_1...j_l}(\soc(L_*\otimes N)\otimes M)=0$.
Therefore 
$$(\soc(L_*\otimes N)\otimes M)^+=\left(\bigoplus_{n< j_1,...,j_l\leq m+n}\Phi_{1...l|j_1...j_l}\right)(\soc(L_*\otimes N)\otimes M).$$
We will prove that  
\begin{equation}\label{contrid}
\left(\bigoplus_{n<j_1,...,j_l\leq
    m+n}\Phi_{1...l|j_1...j_l}\right)((\soc(L_*\otimes N)\otimes
M)=\left(\bigoplus_{n<j_1,...,j_l\leq
    m+n}\Phi_{1...l|j_1...j_l}\right)(L_*\otimes N\otimes M).
\end{equation}
Note that the left-hand side of (\ref{contrid}) is a submodule of the right-hand side. 

We use the fact that  
$$U:=\{w_1\otimes...\otimes w_n\otimes x_1\otimes ...\otimes x_m | w_1,...,w_n,x_1,\dots,x_m\in V, \operatorname{span} (w_1,...,w_n)\cap \operatorname{span} (x_1,\dots,x_m)=\{0\}\}$$
spans $T^{0,m+n}$. Let
$u=v_1\otimes ...\otimes v_l\otimes w_1\otimes ...\otimes w_n\otimes x_1\otimes ...\otimes x_m\in T^{l,m+n}$.
Set
$$\pi(u)=\pi_{L_*}(v_1\otimes ...\otimes v_l)\otimes\pi_N( w_1\otimes ...\otimes w_n)\otimes\pi_M(x_1\otimes ...\otimes x_m),$$
where $\pi_{L_*}: V_*^{\otimes l}\to L_*$, $\pi_N: V^{\otimes n}\to N$ and $\pi_M: V^{\otimes m}\to M$ are respective projectors.
To prove (\ref{contrid}) it is sufficient to show that for any $u$
such that $\operatorname{span} (w_1,...,w_n)\cap \operatorname{span} (x_1,\dots,x_m)=\{0\}$
there exists 
$\tilde{u}\in\soc (T^{l,n})\otimes V^{\otimes  m}$
such that 
\begin{equation}\label{eqs1}
\Phi_{1...l|j_1...j_l}(\pi(u))=\Phi_{1...l|j_1...j_l}(\pi(\tilde{u})) 
\end{equation}
for any choice of $j_1...j_l,\; n<j_1,...,j_l\leq m+n$.

For this purpose consider $\tilde{u}=\tilde{v}_1\otimes\dots\otimes\tilde{v}_l\otimes w_1\otimes...\otimes w_n\otimes x_1\otimes ...\otimes x_m $, 
where $\tilde{v}_i\in V_*$ satisfy the conditions 
$\langle \tilde{v}_i,w_j \rangle=0$
for $1\leq i\leq l,\; 1\leq j\leq n$, and $\langle \tilde{v}_i,x_j \rangle=\langle v_i,x_j \rangle$
for $1\leq i\leq l,\; 1\leq j\leq m$. Such a choice of $\tilde{v}_i$ is
possible since  $\operatorname{span} (w_1,...,w_n)\cap \operatorname{span} (x_1,\dots,x_m)=\{0\}$. 
It is a direct computation to verify that $\tilde{u}$ satisfies \refeq{eqs1}.

To finish the proof, note that  Lemma \ref {lemma4} (c) implies
$$\left(\bigoplus_{n<j_1,...,j_l\leq  m+n}\Phi_{1...l|j_1...j_l}\right)(L_*\otimes N\otimes M)\simeq 
N\otimes \left(\left(\bigoplus_{1\leq j_1,...,j_l\leq m}\Phi_{1...l|j_1...j_l} \right)(L_*\otimes M)\right)=
\mathcal T_{N}\circ \mathcal D_{L}(M).$$
\end{proof}

It is an interesting problem to characterize the image $\hat{\mathbb T}$ of $\mathbb T$ inside the category of linear endofunctors
$\mathcal{E}nd_l(\mathbb T^+)$. 

\section{Categorifying identities for halfs of vertex operators}

As a preliminary step to  categorifying the identites (\ref{v1})--(\ref{v3}),
in this section we categorify the following identities:
\begin{equation}\label{equ1}
H(z)E(-z)=E(-z)H(z)=1,
\end{equation}
\begin{equation}\label{equ2}
 H^*(z)E^*(-z)=E^*(-z)H^*(z)=1,
\end{equation}
\begin{equation}\label{equ3}
 H(z)H^*(w)=(1-z/w)H^*(w)H(z),
\end{equation}
\begin{equation}\label{equ4}
 E(-z)E^*(-w)=(1-z/w)E^*(-w)E(-z),
\end{equation}
\begin{equation}\label{equ5}
 (1-z/w)H(z)E^*(-w)=E^*(-w)H(z),
\end{equation}
\begin{equation}\label{equ6}
 (1-z/w)E(-z)H^*(w)=H^*(w)E(-z).
\end{equation}

For $n\in\mathbb Z_{\geq 0}$, there are two ``extreme'' partitions of
$n$: $n$ itself and $1_n=\underbrace{(1,...,1)}_{n}$.
Set $S^n:=S^n(V)=V_n$, $\Lambda^n:=\Lambda^n(V)=V_{1_n}$.
Set furthermore
$$\mathcal H_n:=\mathcal T_{S^n} ,\; \mathcal E_n:=\mathcal T_{\Lambda^n},\;\mathcal H_n^*:=\mathcal D_{S^n},\; \mathcal E_n^*:=\mathcal D_{\Lambda^n}$$
(cf (\ref {notfunct})).
\begin{prop}\label{prop2} For $m,n\in\mathbb Z_{\geq 0}$ 
there are exact sequences of functors 
\begin{equation}\label{ex1}
0\rightarrow \mathcal H_{n}\circ\;\mathcal  H_m^*\rightarrow
\mathcal H^*_{m}\circ\; \mathcal H_{n}\rightarrow \mathcal
H^*_{m-1}\circ\; \mathcal H_{n-1}\rightarrow 0,
\end{equation}
\begin{equation}\label{ex2}
0\rightarrow \mathcal E_{n}\circ\;\mathcal
E^*_{m}\rightarrow \mathcal E^*_{m}\circ\; \mathcal
E_{n}\rightarrow \mathcal E^*_{m-1}\circ\; \mathcal
E_{n-1}\rightarrow 0,
\end{equation}
\begin{equation}\label{ex3}
0\rightarrow \mathcal H_{n}\circ\;\mathcal
E^*_{m}\rightarrow \mathcal E^*_{m}\;\circ \mathcal
H_{n}\rightarrow  \mathcal H_{n-1}\circ\;\mathcal
E^*_{m-1}\rightarrow 0,
\end{equation}
\begin{equation}\label{ex4}
0\rightarrow \mathcal E_{n}\circ\;\mathcal
H^*_{m}\rightarrow  \mathcal H^*_{m}\circ\;\mathcal
E_{n}\rightarrow  \mathcal E_{n-1}\circ\;\mathcal
H^*_{m-1}\rightarrow 0.
\end{equation}
\end{prop}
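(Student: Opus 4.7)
The plan is to translate each functor exact sequence into a short exact sequence of $\gg$-modules in $\TT$, and then invoke the exactness of $\mathcal{T}$ in its module argument (noted in Section 4 just after the definition of $\mathcal T_\varphi$). Using the isomorphism $\mathcal D_L\circ\mathcal T_N\simeq\mathcal T_{L_*\otimes N}$ (stated just before \refle{lemma3}) together with \refprop{prop1}, which yields $\mathcal T_N\circ\mathcal D_L\simeq\mathcal T_{\soc(L_*\otimes N)}$, the four exact sequences (\ref{ex1})--(\ref{ex4}) become the images under $\mathcal T$ of the following candidate short exact sequences in $\TT$:
\begin{align*}
0 &\to V_{(n),(m)} \to (V_*)_{(m)}\otimes V_{(n)} \to (V_*)_{(m-1)}\otimes V_{(n-1)} \to 0,\\
0 &\to V_{1_n,1_m} \to (V_*)_{1_m}\otimes V_{1_n} \to (V_*)_{1_{m-1}}\otimes V_{1_{n-1}} \to 0,\\
0 &\to V_{(n),1_m} \to (V_*)_{1_m}\otimes V_{(n)} \to V_{(n-1),1_{m-1}} \to 0,\\
0 &\to V_{1_n,(m)} \to (V_*)_{(m)}\otimes V_{1_n} \to V_{1_{n-1},(m-1)} \to 0.
\end{align*}
In each case the leftmost term is the socle of the middle term, as read off from (\ref{socmult}) (only $\gamma=\emptyset$ contributes to the socle).

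For the right-hand surjection I would take the $\gg$-equivariant total contraction
$$\Phi=\sum_{i,j}\Phi_{i|j}:V^{\otimes n}\otimes V_*^{\otimes m}\to V^{\otimes(n-1)}\otimes V_*^{\otimes(m-1)},$$
restricted to the relevant Schur functor subspaces. Its $\field{S}_n\times\field{S}_m$-equivariance under permutation of tensor factors ensures that it descends to a $\gg$-equivariant map on symmetric and alternating pieces (with signs in the alternating case), and by (\ref{socker}) its kernel automatically contains the socle of the source.

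For the mixed cases (\ref{ex3}) and (\ref{ex4}), the product $N^{(n)}_{\lambda',\gamma}N^{1_m}_{\mu',\gamma}$ in (\ref{socmult}) vanishes unless $\gamma$ is simultaneously a single row and a single column, forcing $|\gamma|\leq 1$. Hence the middle term has socle filtration of length exactly two, with simple top quotient $V_{(n-1),1_{m-1}}$ (resp.\ $V_{1_{n-1},(m-1)}$), and the short exact sequence follows from the socle structure as soon as $\Phi$ is seen to be nonzero on a single element. For the pure cases (\ref{ex1}) and (\ref{ex2}), (\ref{socmult}) shows that the middle term has simple socle layers $V_{(n-k),(m-k)}$ (resp.\ $V_{1_{n-k},1_{m-k}}$) for $k=0,\dots,\min(m,n)$ and is therefore uniserial. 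I would take the test vector $u=(e_1^*)^m\otimes e_1^n$ (resp.\ the natural alternating analogue), for which the $\min(m,n)$-fold iterated contraction is a nonzero constant multiple of $e_1^{n-m}$ when $m\leq n$ (or its $V_*$-analogue otherwise). Hence $\Phi(u)$ lies outside every proper submodule of the uniserial target, so $\Phi$ is surjective; comparing composition-factor multiplicities then forces $\ker\Phi=\soc$.

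The main technical obstacle is the surjectivity verification in the pure cases, which rests on uniseriality of the middle module (a consequence of (\ref{socmult}) and the one-row/one-column Littlewood--Richardson computation) and on the direct non-vanishing of iterated contractions of the chosen test vector --- both elementary computations relying only on the pairing $\langle e_1,e_1^*\rangle=1$.
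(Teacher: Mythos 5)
Your proposal is correct, and its skeleton is the same as the paper's: both arguments reduce \refeq{ex1}--\refeq{ex4} to short exact sequences of modules in $\TT$ (your four candidate sequences agree, up to the involution $(\cdot)_*$ and the ordering of indices, with the paper's \refeq{exx1}--\refeq{exx4}), and then convert them into sequences of functors using the exactness of $N\mapsto\mathcal T_N$ in $N$, the isomorphism $\mathcal D_L\circ\mathcal T_N\simeq\mathcal T_{L_*\otimes N}$, and \refprop{prop1} to rewrite $\mathcal T_{\soc(L_*\otimes N)}$ as $\mathcal T_N\circ\mathcal D_L$. The genuine difference lies in how the module-level sequences are verified. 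The paper obtains the surjection abstractly: \refle{dimhom} (injectivity of the target plus \refeq{socmult}) gives a nonzero map $S^m\otimes(S^n)_*\to S^{m-1}\otimes(S^{n-1})_*$, the socle lies in its kernel for multiplicity reasons, and surjectivity together with $\Ker=\soc$ follow by comparing Grothendieck classes and socles. You instead write the map down as the total contraction, get $\soc\subset\Ker$ from \refeq{socker}, prove surjectivity in the pure cases via uniseriality of $(V_*)_{(m)}\otimes V_{(n)}$ (a correct consequence of \refeq{socmult}, since only one-row $\gamma$ contribute) and the test vector $(e_1^*)^{\otimes m}\otimes e_1^{\otimes n}$, and close with the same composition-factor count. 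Your handling of the mixed cases is in fact cleaner than what the paper leaves implicit: since only $\gamma=\emptyset$ or $\gamma=(1)$ can contribute in \refeq{socmult}, the middle module has Loewy length two with simple top, so the sequence is literally the socle filtration and no map needs to be constructed, whereas the paper's quotients there are simple modules and its \refle{dimhom}-based argument is only said to be ``similar.'' Both routes are sound; the paper's is shorter, yours is more explicit and self-contained, modulo the routine checks you acknowledge (that the total contraction restricted to the Schur pieces is $\gg$-equivariant and lands in the correct summand) and the degenerate cases $m=0$ or $n=0$, which neither you nor the paper spell out.
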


\begin{proof} We claim that there are the following exact sequences:
\begin{equation}\label{exx1}
0\rightarrow V_{m,n}\rightarrow S^m\otimes (S^n)_*\rightarrow S^{m-1}\otimes (S^{n-1})_*\rightarrow 0,
\end{equation}
\begin{equation}\label{exx2}
0\rightarrow V_{1_m,1_n}\rightarrow \Lambda^m\otimes (\Lambda^n)_*\rightarrow\Lambda^{m-1}\otimes (\Lambda^{n-1})_*\rightarrow 0,
\end{equation}
\begin{equation}\label{exx3}
0\rightarrow V_{1_m,n}\rightarrow \Lambda^m\otimes (S^n)_*\rightarrow V_{1_{m-1},n-1}\rightarrow 0,
\end{equation}
\begin{equation}\label{exx4}
0\rightarrow V_{m,1_n}\rightarrow S^m\otimes(\Lambda^n)_*\rightarrow V_{m-1,1_{n-1}}\rightarrow 0.
\end{equation}
Let us for instance construct (\ref{exx1}). By Lemma \ref{dimhom} there is a non-zero
homomorphism $\phi:S^m\otimes (S^n)_*\rightarrow S^{m-1}\otimes (S^{n-1})_*$. Clearly, 
$V_{m,n}=\soc(S^m\otimes (S^n)_*)\subset \Ker\phi$ since $V_{m,n}$ is not a
constituent of $S^{m-1}\otimes (S^{n-1})_*$ by (\ref{socmult}). Again by (\ref{socmult}),
$$[S^m\otimes (S^n)_*]=[S^{m-1}\otimes (S^{n-1})_*]+[V_{m,n}]$$
in the Grothendieck ring of $\mathbb T$, and the socle of  $(S^m\otimes (S^n)_*)/V_{m,n}$
is isomorphic to the socle of $S^{m-1}\otimes (S^{n-1})_*$. Hence $\phi$
is surjective and $V_{m,n}=\Ker\phi$.
The sequences (\ref{exx2})-(\ref{exx4}) are constructed by similar considerations.

We will now show that the sequence (\ref{exx1}) implies the existence
of (\ref{ex1}). Indeed, by the remark at the beginning of section 4, the exact sequence 
(\ref{exx1}) induces  an exact sequence
$$0\rightarrow \mathcal T_{V_{m,n}}\rightarrow\mathcal D_{S^m}\circ\; \mathcal T_{S^n}\rightarrow \mathcal D_{S^{m-1}}\circ\; \mathcal T_{S^{n-1}}\rightarrow 0.$$
Since $\mathcal T_{V_{m,n}}\simeq \mathcal T_{S^n}\circ\;\mathcal D_{S^m}$ by Proposition \ref{prop1}, the existence of 
(\ref{ex1}) follows.

The existence of sequences (\ref{ex2})-(\ref{ex4}) is proved in a
similar way by using the sequences (\ref{exx2})-(\ref{exx4}).
\end{proof}

\begin{lemma}\label{pass} For any $n\geq 0$
$$[\mathcal H_n]=h_n ,\; [\mathcal E_n]=e_n,\;[\mathcal H_n^*]=h_n^*,\; [\mathcal E_n^*]=e_n^*,$$
where $h_n,e_n,h_n^*,e_n^*$ are considered as operators on $\bold{H}^+$
as explained in Section 1. 
\end{lemma}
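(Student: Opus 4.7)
The plan is to compute each of the operators $[\mathcal{H}_n]$, $[\mathcal{E}_n]$, $[\mathcal{H}_n^*]$, $[\mathcal{E}_n^*]$ on $\bold{H}^+$ using the character isomorphism $\operatorname{ch}$ of \refle{grfock}, and match them against the multiplication operators $h_n,e_n$ and their adjoints $h_n^*,e_n^*$ recalled in Section 2. The two creation functors are handled directly by passing to Grothendieck rings, and the two annihilation functors are then obtained formally from \refcor{coradj}.

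First I would observe that for any $N,M \in \mathbb{T}^+$ the tensor product $N\otimes M$ already lies in $\mathbb{T}^+$: by the Littlewood--Richardson rule $V_\lambda\otimes V_\mu\simeq\bigoplus_\nu N^\nu_{\lambda,\mu}V_\nu$ is a direct sum of polynomial simple objects, so there is nothing for the projection functor $(\cdot)^+$ to remove and $\mathcal{T}_N(M)=N\otimes M$ as $\gg$-modules. This shows that under $\operatorname{ch}$ the endofunctor $[\mathcal{T}_N]$ becomes multiplication by $\operatorname{Sch}(N)\in\bold{H}^+$. Specialising to $N=S^n(V)=V_n$ and $N=\Lambda^n(V)=V_{1_n}$ and using the standard identities $\operatorname{Sch}(V_n)=s_n=h_n$ and $\operatorname{Sch}(V_{1_n})=s_{1_n}=e_n$, I would conclude $[\mathcal{H}_n]=h_n$ and $[\mathcal{E}_n]=e_n$.

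For the annihilation functors I would invoke \refcor{coradj}, which asserts that $[\mathcal{D}_N]$ and $[\mathcal{T}_N]$ are mutually dual operators on $\bold{H}^+$ with respect to the inner product for which $\{s_\lambda\}_{\lambda\in\bold{Part}}$ is an orthonormal basis. Since $h_n^*$ and $e_n^*$ are defined in Section 2 as precisely the adjoints of multiplication by $h_n$ and $e_n$ with respect to this pairing, the identifications $[\mathcal{H}_n^*]=[\mathcal{D}_{S^n(V)}]=h_n^*$ and $[\mathcal{E}_n^*]=[\mathcal{D}_{\Lambda^n(V)}]=e_n^*$ are immediate.

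I do not expect any genuine obstacle. The only nontrivial point is the observation that $\mathbb{T}^+$ is closed under tensor product, which trivialises the role of $(\cdot)^+$ inside $\mathcal{T}_N$; after that, the argument is a direct translation through \refle{grfock} and a single application of the adjunction of \refle{adjoint} in the form stated in \refcor{coradj}.
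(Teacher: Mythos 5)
Your proposal is correct and follows essentially the same route as the paper: identify $[\mathcal{T}_N]$ with multiplication by $\operatorname{Sch}(N)$ on $\bold{H}^+$ (giving $[\mathcal H_n]=h_n$, $[\mathcal E_n]=e_n$ via $s_n=h_n$, $s_{1_n}=e_n$) and then obtain the starred equalities from the duality of \refcor{coradj}. The only difference is that you spell out the justification of $[\mathcal T_N]=\operatorname{Sch}(N)$ (closure of $\mathbb{T}^+$ under tensor product and multiplicativity of characters), which the paper states as an observation without proof.
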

\begin{proof} First, observe that $[\mathcal T_N]=\operatorname{Sch}(N)$ for $N\in\TT^*$. This implies the equalities $[\mathcal H_n]=h_n , [\mathcal E_n]=e_n$.
The two other equalities follow via  Corollary \ref{coradj}. 
\end{proof}

We now see that the exact sequences (\ref{ex1})-(\ref{ex4}) categorify the identities (\ref{equ3})-(\ref{equ6}).
More precisely we have the following.
\begin{corollary}\label{cat1} \refprop{prop2} implies (\ref{equ3})-(\ref{equ6}).
\end{corollary}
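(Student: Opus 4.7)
The plan is to pass each short exact sequence in \refprop{prop2} to the complexified Grothendieck ring and then repackage the resulting operator identities as identities of formal power series in $z$ and $w^{-1}$. Applying $[\cdot]$ to a short exact sequence $0\to A\to B\to C\to 0$ of exact endofunctors yields $[A]=[B]-[C]$, and by Lemma \ref{pass} together with the fact that $[\mathcal F\circ\mathcal G]=[\mathcal F]\circ[\mathcal G]$, the four sequences translate into the operator identities on $\bold{H}^+$
\begin{align*}
h_n\,h_m^{*} &= h_m^{*}\,h_n-h_{m-1}^{*}\,h_{n-1},\\
e_n\,e_m^{*} &= e_m^{*}\,e_n-e_{m-1}^{*}\,e_{n-1},\\
h_n\,e_m^{*} &= e_m^{*}\,h_n-h_{n-1}\,e_{m-1}^{*},\\
e_n\,h_m^{*} &= h_m^{*}\,e_n-e_{n-1}\,h_{m-1}^{*},
\end{align*}
for all $m,n\geq 1$. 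The boundary cases $m=0$ or $n=0$ hold trivially with the usual conventions $\mathcal H_0=\mathcal E_0=\mathrm{Id}$, $h_0=e_0=h_0^{*}=e_0^{*}=1$, and $h_{-1}=e_{-1}=h_{-1}^{*}=e_{-1}^{*}=0$, so that the third term in each sequence vanishes.

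Next I would multiply the first identity by $z^n w^{-m}$ and sum over $m,n\geq 0$. Using $H(z)=\sum_{n\geq 0}h_n z^n$ and $H^{*}(w)=\sum_{m\geq 0}h_m^{*}w^{-m}$, the shift $(m,n)\mapsto(m+1,n+1)$ in the last sum produces a factor of $z/w$, yielding
$$H(z)H^{*}(w)=H^{*}(w)H(z)-\tfrac{z}{w}H^{*}(w)H(z)=(1-z/w)H^{*}(w)H(z),$$
which is \refeq{equ3}. The same manipulation applied to the second identity, after substituting $z\mapsto -z$, $w\mapsto -w$ and tracking the signs $(-1)^n$ and $(-1)^m$ from $E(-z)=\sum_{n\geq 0}(-1)^n e_n z^n$ and $E^{*}(-w)=\sum_{m\geq 0}(-1)^m e_m^{*}w^{-m}$, gives \refeq{equ4}. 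The third identity similarly yields $H(z)E^{*}(-w)-(z/w)H(z)E^{*}(-w)=E^{*}(-w)H(z)$, i.e.\ \refeq{equ5}, once one notes that the sign $(-1)^m$ appears on both sides and cancels, while the fourth produces \refeq{equ6} in the same manner.

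The identities \refeq{equ1} and \refeq{equ2} are consequences of general properties of symmetric functions and do not require a separate categorical input at this stage. Thus the whole argument reduces to bookkeeping with generating functions: no step is really a ``main obstacle'', but the mildly delicate point is to verify that the boundary terms at $m=0$ or $n=0$ are correctly accounted for, so that the summation over all $m,n\geq 0$ produces exactly the factor $(1-z/w)$ on the right-hand sides of \refeq{equ3}--\refeq{equ6}, and that the alternating signs coming from the substitution $z\mapsto -z$ (respectively $w\mapsto -w$) match between the two sides of the identities involving the operators $e_n$ and $e_m^{*}$.
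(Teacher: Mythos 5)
Your argument is correct and is essentially the paper's own proof: the paper likewise passes the exact sequences (\ref{ex1})--(\ref{ex4}) to operators on $\bold{H}^+$ via Lemma \ref{pass}, notes that (\ref{equ3}) is equivalent to the coefficient identity $h^*_m h_n - h_n h^*_m = h^*_{m-1}h_{n-1}$, and treats the other three cases similarly. You merely spell out the generating-function bookkeeping (including the boundary conventions and sign cancellations) that the paper leaves implicit, which is fine.
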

\begin{proof} Consider for instance the identity (\ref{equ3})
$$H(z)H^*(w)=(1-z/w)H^*(w)H(z).$$
It is equivalent to the equality
$$h^*_m h_n-h_n h^*_m=h^*_{m-1}h_{n-1}$$
which follows immediately from (\ref{ex1}) and Lemma
\ref{pass}.

The arguments in the remaining three cases are similar. 
\end{proof}

Now we proceed to the categorification of the identities (\ref{equ1})-(\ref{equ2}).
Clearly, $H(z)$ and $E(-z)$ commute, therefore  (\ref{equ1}) is
equivalent to 
\begin{equation}\label{koz1}
H(z)E(-z)=1
\end{equation}
The equality \refeq{koz1} can be rewritten as
\begin{equation}\label{koz2}
\sum_{m+n=k}(-1)^m{h}_n{e}_m=0\;\mathrm{for\;} k>0.
\end{equation}
Similarly, (\ref{equ2}) can be rewritten as 
\begin{equation}\label{koz3}
\sum_{m+n=k}(-1)^m{h}^*_n{e}^*_m=0\;\mathrm{for\;} k>0.
\end{equation}
Lemma \ref{pass} together with \refle{lemma5} below give a categorical
proof of (\ref{koz2}) and (\ref{koz3}).

\begin{lemma}\label{lemma5}
The complexes $C_k$,
$$0\rightarrow S^k\rightarrow ...\rightarrow S^m\otimes\Lambda^n\rightarrow S^{m-1}\otimes\Lambda^{n+1}\rightarrow ...\rightarrow\Lambda^k\rightarrow 0,$$
and $(C_k)_*$,
$$0\rightarrow (S_*)^k\rightarrow \dots\rightarrow (S_*)^m\otimes(\Lambda_*)^n\rightarrow ...\rightarrow(\Lambda_*)^k\rightarrow 0,$$
are exact except for $k=0$. They induce complexes of functors
\begin{equation}\label{le5eq1}
0\rightarrow \mathcal H_k\xrightarrow{\delta} ...\xrightarrow{\delta} \mathcal H_m\circ
 \mathcal  E_{k-m}\xrightarrow{\delta} ...\xrightarrow{\delta}\mathcal E_k\rightarrow 0,
\end{equation}
\begin{equation}\label{le5eq2}
0\rightarrow \mathcal H^*_k\xrightarrow{\delta^*} ...\xrightarrow{\delta^*} \mathcal H^*_m\circ
 \mathcal E^*_{k-m}\xrightarrow{\delta^*} ...\xrightarrow{\delta^*}\mathcal E^*_k\rightarrow 0,
\end{equation}
which are exact for $k\geq 1$.
\end{lemma}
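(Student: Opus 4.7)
The plan is to establish exactness of the underlying complex $C_k$ (hence of $(C_k)_*$) in $\mathbb{T}$, and then to deduce exactness of the induced complexes of functors using exactness of $\mathcal{T}_{(\cdot)}$ together with Lemma \ref{lemma3}.

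First I would write down the Koszul differential $\delta: S^m(V) \otimes \Lambda^n(V) \to S^{m-1}(V) \otimes \Lambda^{n+1}(V)$ by
$$\delta(v_1\cdots v_m \otimes \omega) = \sum_{i=1}^m v_1\cdots \hat{v}_i\cdots v_m \otimes v_i \wedge \omega,$$
and check $\delta^2=0$. The restriction of the resulting bigraded complex $S(V)\otimes\Lambda(V)$ to total degree $k=m+n$ is exactly $C_k$.

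Next I would introduce the standard contracting homotopy $h: S^m(V)\otimes\Lambda^n(V) \to S^{m+1}(V)\otimes\Lambda^{n-1}(V)$ given by
$$h(P \otimes w_1\wedge\cdots\wedge w_n) = \sum_{j=1}^n (-1)^{j-1} (P\cdot w_j) \otimes w_1\wedge\cdots\wedge \hat{w}_j\wedge\cdots\wedge w_n,$$
and verify by direct computation that $(\delta h + h\delta)(P\otimes\omega) = (m+n)(P\otimes\omega)$ on $S^m\otimes\Lambda^n$: the ``diagonal'' contributions assemble the scalar $m+n$, while the ``off-diagonal'' terms cancel in pairs thanks to the signs $(-1)^{j-1}+(-1)^j=0$. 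Since we work in characteristic zero, this shows that $C_k$ is exact for $k\geq 1$; exactness of $(C_k)_*$ then follows from the fact that the twist $(\cdot)_*:\mathbb T\to\mathbb T$ is an equivalence of categories, hence exact.

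Finally, to obtain (\ref{le5eq1}) I use that $\mathcal H_m\circ\mathcal E_{k-m}\simeq \mathcal T_{S^m\otimes\Lambda^{k-m}}$ together with the remark at the start of Section 4, namely that $\mathcal T_{(\cdot)}$ sends short exact sequences in $\mathbb T$ to short exact sequences of linear endofunctors on $\mathbb T^+$ (which uses exactness of $\otimes$ and Corollary \ref{cor2}). Splitting the exact complex $C_k$ into short exact sequences and applying $\mathcal T_{(\cdot)}$ term-by-term then yields (\ref{le5eq1}). For (\ref{le5eq2}), Lemma \ref{lemma3} identifies $\mathcal H^*_m\circ \mathcal E^*_{k-m}$ with $\mathcal D_{S^m\otimes\Lambda^{k-m}} = \mathcal T_{(S^m\otimes\Lambda^{k-m})_*}$, so the same argument applied to $(C_k)_*$ finishes the proof. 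The only real obstacle is the sign/index bookkeeping in the homotopy identity $\delta h+h\delta=(m+n)\id$, which is entirely classical but must be carried out with care.
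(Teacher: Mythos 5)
Your proof is correct and follows essentially the same route as the paper: the paper simply observes that $\bigoplus_k C_k$ and $\bigoplus_k (C_k)_*$ are Koszul complexes (hence exact except in degree $0$) and then applies the functor $\mathcal T$, using $\mathcal T_N\circ\mathcal T_L\simeq\mathcal T_{N\otimes L}$, Lemma \ref{lemma3} and the exactness of $\mathcal T_{(\cdot)}$ exactly as you do. The only difference is that you spell out the classical contracting-homotopy identity $\delta h+h\delta=(m+n)\,\id$, which the paper treats as well known.
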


\begin{proof}
Note that $\bigoplus_{k\geq 0}C_k$ and $\bigoplus_{k\geq 0}(C_k)_*$ are Koszul complexes. 
Hence they are exact except for $k=0$. After application of the functor $\mathcal T$ we obtain that the complexes \refeq{le5eq1} and \refeq{le5eq2} are also exact
for $k\geq 1$.
\end{proof}

\section {Categorification of Clifford algebra}
Write the vertex operators $X(z)$ and $X^*(z)$ as
$$X(z)=\sum_{a\in\mathbb Z}X_az^a,\,\,X^*(z)=\sum_{a\in\mathbb Z}X^*_az^a.$$
Our next goal is to categorify the coefficients $X_a$ and $X_a^*$.
Let 
$$R_a(q):=S^{a+q}\otimes \Lambda^q_*\otimes T(V),\,\,R_a:=\bigoplus_{q\geq\operatorname{max}(-a,0)} R_a(q),$$
and
$$R_a^*(p):=\Lambda^{a+p}\otimes S_*^{p}\otimes T(V),\,\,R^*_a:=\bigoplus_{p\geq\operatorname{max}(-a,0)} R_a^*(p).$$
Define $\theta_a:R_a\to R_a$ and $\theta_a^*:R^*_a\to R^*_a$ by the formulas
$$\theta_a(x_1\dots x_p\otimes y_1\wedge\dots\wedge y_q\otimes v_1\otimes\dots\otimes v_k):=$$
$$\sum_{i,j,s}(-1)^i\langle y_i,v_j\rangle x_1\dots\hat{x}_s\dots x_p\otimes y_1\wedge\dots\wedge\hat{y_i}\wedge\dots\wedge y_q\otimes
v_1\otimes\dots\otimes v_{j-1}\otimes x_s\otimes v_{j+1}\otimes\dots\otimes v_k,$$
$$\theta_a^*(x_1\wedge\dots\wedge x_p\otimes y_1\dots y_q\otimes v_1\otimes\dots\otimes v_k):=$$
$$\sum_{i,j,s}(-1)^i\langle y_i,v_j\rangle x_1\wedge\dots\wedge\hat{x}_s\wedge\dots\wedge x_p\otimes y_1\dots\hat{y_i}\dots y_q\otimes
v_1\otimes\dots\otimes v_{j-1}\otimes x_s\otimes v_{j+1}\otimes\dots\otimes v_k.$$

Define also operators $d:R_a\to R_a$ and
$d^*:R^*_a\to R^*_a$:
$$d(x_1\dots x_p\otimes y_1\wedge\dots\wedge y_q\otimes m):=\sum_{i,j} (-1)^i \langle y_i,x_j\rangle 
 x_1\dots\hat{x}_j\dots x_p\otimes y_1\wedge\dots\wedge\hat{y_i}\wedge\dots\wedge y_q\otimes m,$$
$$d^*(x_1\wedge\dots\wedge x_p\otimes y_1\dots y_q\otimes m):=
\sum_{i,j}(-1)^j \langle y_i,x_j\rangle x_1\wedge\dots\wedge\hat{x}_j\wedge\dots\wedge x_p\otimes y_1\dots\hat{y_i}\dots y_q\otimes m.$$

\begin{lemma}\label{mainproperties}

(a) $\theta_a$, $\theta_a^*$, $d$ and $d^*$ commute with the action of the symmetric
  group $\mathbb S_k$ on $V^{\otimes k}$;

(b) $d^2=0$, $(d^*)^2=0$ and the complexes $(R_a,d)$,  $(R_a^*,d^*)$ are acyclic for
  all $a\in\mathbb Z$;

(c) $\theta_a^2=d\theta_a=-\theta_a d,\;\;(\theta_a^*)^2=d^*\theta_a=-\theta_a d^*$.
\end{lemma}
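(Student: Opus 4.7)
The plan is to verify the three assertions by direct computation, with the only nontrivial issue being the acyclicity in part (b).

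For (a), each of $\theta_a$, $\theta_a^*$, $d$, $d^*$ is defined by a sum in which the position $j$ inside the $V^{\otimes k}$-factor (and, where relevant, the substitution position $s$) is summed over all values. A permutation $\sigma \in \mathbb{S}_k$ merely reindexes this sum via $j \mapsto \sigma^{-1}(j)$, and the signs $(-1)^i$ depend only on the exterior index $i$. Equivariance is therefore immediate from the definitions.

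For (b), $d^2=0$ and $(d^*)^2=0$ are the standard Koszul sign cancellations: expanding $d^2$ on a basis element produces a double sum over contraction pairs $(i_1,j_1),(i_2,j_2)$ whose terms cancel in pairs under the involution swapping the two contractions, because removing $y_{i_1}$ first shifts the exterior index $i_2$ by one and flips the sign $(-1)^{i_2}$. For acyclicity, first note that $d$ is trivial on the $T(V)$-factor, so the claim reduces to acyclicity of the Koszul-type complex $K_a^\bullet = \bigoplus_q S^{a+q}(V)\otimes \Lambda^q(V_*)$ for each $a\in\mathbb Z$. On any finite-dimensional $W\subset V$ spanned by basis vectors $\{e_n\}_{n\in I_W}$, the homotopy $h(f\otimes\omega):=\sum_{n\in I_W} e_n f\otimes e^n\wedge\omega$ is a finite sum satisfying $dh+hd = -(a+1)\,\id$, which proves acyclicity of the finite-dimensional subcomplex for $a\neq -1$; taking the colimit over increasing $W$ gives acyclicity of $K_a^\bullet(V)$ for $a\neq -1$. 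The case $a=-1$ needs a separate argument using the infinite-dimensionality of $V$: for any cycle in $K_{-1}^q(V)$ one can choose a basis vector $e_n$ of $V$ not appearing in its support and construct an explicit preimage under $d$ using $e_n$ and $e^n$.

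For (c), I would expand $\theta_a^2$, $d\theta_a$, and $\theta_a d$ as multiple sums indexed by the substitution and contraction parameters and pair terms. The identity $\theta_a^2=d\theta_a$ isolates the ``diagonal'' contribution to $\theta_a^2$ in which the second $\theta_a$ contracts a $y$ against an $x_s$ just substituted into the $V^{\otimes k}$-factor by the first $\theta_a$ (that is, the second $j$ equals the first $j$); reindexing the dummy pair $s_1\leftrightarrow s_2$ identifies this diagonal with $d\theta_a$, while the off-diagonal contributions to $\theta_a^2$ (second $j$ different from first $j$) cancel pairwise under the swap $(i_1,s_1,j_1)\leftrightarrow(i_2,s_2,j_2)$ by the same mechanism as in $d^2=0$. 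The identity $d\theta_a+\theta_a d=0$ follows from a similar swap exchanging the $d$-contraction with the $\theta_a$-substitution; a case-by-case check on $a=i_1,b=i_2$ vs.\ $a>b$ or $a<b$ shows the two sums differ by precisely the sign $(-1)$ arising from the shift of the exterior index in the reduced tuple. The starred identities are strictly parallel.

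The main obstacle is the acyclicity in part (b), specifically the degenerate case $a=-1$: the universal homotopy yields $dh+hd = 0$, so one must exploit the infinite-dimensionality of $V$ (there is always a ``fresh'' basis vector outside the support of any given cycle) to produce preimages by hand. Everything else in the lemma amounts to careful but routine sign bookkeeping.
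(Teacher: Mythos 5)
Parts (a) and (c) of your plan coincide with the paper's own argument: (a) is immediate from the definitions, and your treatment of (c) — the ``diagonal'' terms of $\theta_a^2$, where the second contraction hits the letter just substituted into the $V^{\otimes k}$-factor, reproduce the composition with $d$, while the off-diagonal terms cancel in pairs through the exterior sign — is precisely the direct computation carried out in the paper, so there the only remaining work is careful sign bookkeeping (which the paper itself does not spell out immaculately either).

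The flaw is in (b). Your homotopy identity is incorrect: with $h(f\otimes\omega)=\sum_{n\in I_W}e_nf\otimes e^n\wedge\omega$ and $d$ the contraction, on $S^p(W)\otimes\Lambda^q(W^\vee)$ one finds $dh+hd=\pm(p-q+\dim W)\,\id$, i.e.\ $\pm(a+\dim W)\,\id$ on the piece with $p-q=a$, not $-(a+1)\,\id$; the scalar necessarily depends on $\dim W$ (test it on $1\otimes 1$ with $\dim W=2$: $dh(1\otimes1)=\pm 2$). This is not a harmless constant slip: the $\dim W$-dependence is exactly what separates the finite-dimensional case, where the complex has homology in the single degree $a=-\dim V$ (e.g.\ the top exterior power survives), from the infinite-dimensional one, so your dichotomy ``homotopy for $a\neq-1$, special argument at $a=-1$'' is an artifact of the miscalculation. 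Repaired correctly the argument in fact simplifies: given a cycle $z$, choose finite-dimensional $W\subset V$ and $W^\vee\subset V_*$ in perfect pairing containing all letters of $z$ (a point you glossed over, but arrangeable by enlarging), and enlarge further so that $\dim W\neq-a$; then $z=\pm\,d\bigl(h(z)\bigr)/(a+\dim W)$, with no exceptional case — infinite-dimensionality enters only through the freedom to enlarge $W$, equivalently through your ``fresh basis vector'' device, which actually works for every $a$. With that fix your proof of acyclicity is a legitimate, more elementary alternative to the paper's, which instead quotes the acyclicity as well known and notes that it also follows from the exact sequences (\ref{exx3})--(\ref{exx4}), i.e.\ from the socle structure of the injective modules $S^m\otimes(\Lambda^n)_*$ and $\Lambda^m\otimes(S^n)_*$: on each graded piece the differential is (up to scalar) the unique nonzero map, its kernel is the simple socle and its image is the socle of the next term, whence exactness.
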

\begin{proof} (a) is straightforward, (b) is well known and also follows from (\ref{exx3})-(\ref{exx4}).
We will prove (c) by a direct calculation. Indeed, set
$$z:=x_1\dots x_p \otimes y_1\wedge\dots\wedge y_q\otimes v_1\otimes\dots\otimes v_k.$$
Furthermore, let $X_{s,t}$ stand for $x_1\dots x_p$ with $x_s$ and
$x_t$ removed, and $Y_{i,c}$ stand for $y_1\wedge\dots\wedge y_q$ with $y_c$
and $y_i$ removed and multiplied by $(-1)^{i+c}$ if $i<c$ and  $(-1)^{i+c-1}$ if $i>c$ .
Then 
$$\theta_a(\theta_a(z))=\sum_{s\neq t, j,i\neq c}\langle y_i,v_j\rangle\langle y_c,x_s\rangle X_{s,t}\otimes Y_{c,i}\otimes
v_1\otimes\dots\otimes v_{j-1}\otimes x_s\otimes v_{j+1}\otimes\dots\otimes v_k=$$
$$\theta_a\left(\sum_{c,s}(-1)^c \langle y_c,x_s\rangle x_1\dots \hat x_s\dots x_p\otimes y_1\wedge\dots\wedge\hat y_c\wedge\dots\wedge
y_q\otimes v_1\otimes\dots\otimes v_k\right)=\theta_a(d(z))=$$
$$-d\left(\sum_{i,t,j}(-1)^i \langle y_i,v_j\rangle x_1\dots \hat x_t\dots x_p\otimes y_1\wedge\dots\wedge\hat y_i\wedge\dots\wedge
y_q\otimes v_1\otimes\dots v_{j-1}\otimes x_t\otimes\dots\otimes v_k\right)=d(\theta_a(z)).$$

Checking (c) for $\theta_a^*$ and $d^*$ is similar.
\end{proof}

Let $M$ be a submodule of $T^\cdot(V)$, and $\pi_M\in\operatorname{End}_{\gg}(T^\cdot(V))$
denote a projector onto $M$.
Since $p_M$ is an element of the direct sum $\bigoplus_{n\geq o}\mathbb C[\mathbb S_n]$, Lemma \ref{mainproperties}(a) implies that 
$d, d^*$ and $\theta_a,\theta_a^*$ commute
with $\pi_M$. Therefore $d, d^*$ and $\theta_a,\theta_a^*$ 
are well defined linear operators in $R_a(M)=\pi_M R_a$ and in  $R^*_a(M)=\pi_M R^*_a$. Moreover, since any 
$M\in\mathbb T^+$ is a submodule of a direct sum of finitely many
copies of $T^\cdot(V)$, Schur--Weyl dualty implies that
$R_a(M)$, $R^*_a(M)$ and  
$d,\theta_a\in\operatorname{End}_\gg(R_a(M))$, $d^*,\theta_a^*\in \operatorname{End}_\gg(R^*_a(M))$
are well-defined for any $M\in\mathbb T^+$. 

Lemma \ref{mainproperties}(c) shows that $\Ker d$ is 
$\theta_a$-stable and  $\Ker d^*$ is $\theta_a^*$-stable. If we define
$$S_a(M):=\Ker d\cap R_a(M),\;\;S_a^*(M):=\Ker d^*\cap R_a^*(M),$$ 
and denote by $\theta_a(M)$ (respectively, $\theta_a^*(M)$) the restriction of
$\theta_a$ (respectively, $\theta_a^*$) on $S_a(M)$ (respectively, $S_a^*(M)$) then, again by  Lemma \ref{mainproperties}(c),
$(S_a(M),\theta_a(M))$  and $(S^*_a(M),\theta_a^*(M))$ become complexes. Next we set
$$\mathcal X_a(M):=(S_a(M))^+,\;\;\mathcal X_a^*(M):=(S_a^*(M))^+.$$

The exact sequences (\ref{ex3}) and (\ref{ex4}) imply $\Ker d\cap R_a(q)(M)=\mathcal T_{S^{a+q}}\circ \mathcal D_{\Lambda^{q}}(M)$ 
and $\Ker d\cap R^*_a(p)(M)=\mathcal T_{\Lambda^{a+p}}\circ \mathcal D_{S^{p}}(M)$. Therefore we have
\begin{equation}\label{vert1}
\mathcal X_a(M)=\bigoplus_{p-q=a} \mathcal H_p\circ\mathcal E^*_q(M),\,\,\mathcal X^*_a(M)=\bigoplus_{p-q=a} \mathcal E_p\circ\mathcal H^*_q(M),
\end{equation}
or simply
\begin{equation}\label{vert2}
\mathcal X_a=\bigoplus_{p-q=a} \mathcal H_p\circ\mathcal E^*_q,\,\,\mathcal X^*_a=\bigoplus_{p-q=a} \mathcal E_p\circ\mathcal H^*_q.\nonumber
\end{equation}
Moreover, $\mathcal X_a$ and $\mathcal X_a^*$ are well-defined complexes of linear endofunctors on $\mathbb T^+$: 
\begin{equation}
\mathcal X_a(q)=\mathcal H_{q+a}\circ\mathcal E^*_q,\,\,\mathcal X^*_a(q)=\mathcal E_{q+a}\circ\mathcal H^*_q,\nonumber
\end{equation}
and the differentials $\mathcal X_a(q)\to\mathcal X_a(q-1)$, $\mathcal X^*_a(q)\to\mathcal X^*_a(q-1)$ are simply the restrictions of $\theta_a$ and $\theta^*_a$
respectively. By abuse of notation we denote these differentials by the same letters $\theta_a$ and $\theta_a^*$.
These complexes can be considered as functors from $\TT^+$ to $\mathbb{DT}^+$. However, we will consider them as functors from  $\mathbb{DT}^+$ to $\mathbb{DT}^+$.
Indeed, an object of $\mathbb{DT}^+$ is isomorphic to a direct sum of simple objects $V_\lambda[n]$ for $\lambda\in\bold{Part},n\in\mathbb Z$. Then
applying $\mathcal X_a$ (or $\mathcal X^*_a$) yields a direct sum of appropriately shifted complexes. 

\begin{lemma}\label{clif1}  Let $\chi: \mathbb{DT}^+\to \bold{H}^+$
denote the Euler characteristic map. Then
$$\chi\circ\mathcal X_a=X_a\circ\chi,\;\;\chi\circ\mathcal X_a^*=X^*_a\circ\chi.$$
\end{lemma}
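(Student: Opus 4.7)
The proof is a direct computation reducing the identity to Lemma \ref{pass} and the expansion of the vertex operators as generating functions.

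First, since $\mathbb{DT}^+$ is semisimple with simple objects $V_\lambda[n]$, the Euler characteristic map $\chi$ sends a bounded complex $C^\cdot$ of objects of $\mathbb{T}^+$ to the alternating sum $\sum_q (-1)^q \operatorname{ch}[C^q]$ in $\bold{H}^+$. Both $\mathcal{X}_a$ and multiplication by the operator $X_a$ are additive and commute with the cohomological shift, so it suffices to verify the identity on the objects $V_\lambda \in \mathbb{T}^+ \subset \mathbb{DT}^+$.

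Next, because the $q$-th term of the complex $\mathcal{X}_a$ is $\mathcal{X}_a(q) = \mathcal{H}_{q+a} \circ \mathcal{E}^*_q$ and $\operatorname{ch}[V_\lambda] = s_\lambda$, Lemma \ref{pass} gives
$$\chi\bigl(\mathcal{X}_a(V_\lambda)\bigr) \;=\; \sum_{q \geq \max(0,-a)} (-1)^q\,[\mathcal{H}_{q+a} \circ \mathcal{E}^*_q(V_\lambda)] \;=\; \Bigl(\sum_q (-1)^q h_{q+a}\, e^*_q\Bigr)(s_\lambda).$$
On the other hand, expanding $X(z) = H(z)E^*(-z)$ directly yields
$$X(z) \;=\; \Bigl(\sum_p h_p z^p\Bigr)\Bigl(\sum_q (-1)^q e^*_q z^{-q}\Bigr),$$
so the coefficient $X_a$ of $z^a$ equals exactly $\sum_q (-1)^q h_{q+a} e^*_q$. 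Hence $\chi \circ \mathcal{X}_a = X_a \circ \chi$, and the analogous argument applied to $X^*(z) = E(-z)H^*(z)$, with $\mathcal{E}_p$ and $\mathcal{H}^*_q$ replacing $\mathcal{H}_p$ and $\mathcal{E}^*_q$, proves $\chi \circ \mathcal{X}_a^* = X_a^* \circ \chi$.

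The proof presents no conceptual obstacle once the setup is in place; the only care needed is to match the signs. The differential of $\mathcal{X}_a$ (respectively $\mathcal{X}_a^*$) lowers the index $q$ by one, producing the $(-1)^q$ in the Euler characteristic, and this matches the signs coming from $E^*(-z)$ (respectively $E(-z)$) in the generating-function expansion. All of the nontrivial content has already been packaged into Lemma \ref{pass}, which translates the four families of endofunctors $\mathcal{H}_n, \mathcal{E}_n, \mathcal{H}^*_n, \mathcal{E}^*_n$ into the symmetric function operators $h_n, e_n, h^*_n, e^*_n$, and into the very construction of $\mathcal{X}_a$ and $\mathcal{X}_a^*$ as complexes whose terms are precisely the monomials appearing in the two halves of each vertex operator.
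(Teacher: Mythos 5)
Your proof of the first identity follows the paper's own argument essentially verbatim: expand $\mathcal X_a$ into its terms $\mathcal H_{q+a}\circ\mathcal E^*_q$, take the alternating sum, and translate via Lemma \ref{pass} and the definition $X(z)=H(z)E^*(-z)$; that part is correct and complete.

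Your treatment of the starred identity, however, contains a sign slip. In $X^*(z)=E(-z)H^*(z)$ the sign is carried by the exterior degree: $X^*_a=\sum_{p-q=a}(-1)^p e_p h^*_q$ with $p=q+a$, whereas your Euler characteristic weights the term $\mathcal E_{q+a}\circ\mathcal H^*_q$ by $(-1)^q$. Since $(-1)^q=(-1)^a(-1)^p$, your computation actually yields $\chi\circ\mathcal X^*_a=(-1)^a\,X^*_a\circ\chi$, so the claim that ``the $(-1)^q$ matches the signs coming from $E(-z)$'' fails for odd $a$. The paper's proof weights $\mathcal E_p\circ\mathcal H^*_q$ by $(-1)^p$, i.e.\ it implicitly places that term in cohomological degree of parity $p$ rather than $q$; the two choices differ by an overall shift of the complex $\mathcal X^*_a$ by $a$ inside $\mathbb{DT}^+$. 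Thus the issue is one of normalizing the cohomological placement rather than a conceptual gap, but as written your sign-matching step for $\mathcal X^*_a$ does not go through: you must either grade $\mathcal X^*_a$ by the parity of the $\mathcal E$-degree $p$ (as the paper does) or build the corresponding shift into the statement.
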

\begin{proof} By (\ref{vert1}) for any $M\in \TT^+$ we have 
$$\chi(\mathcal X_a(M))=\sum_{p-q=a}(-1)^q [\mathcal H_p\circ\mathcal E^*_q(M)], \,\,\chi(\mathcal X^*_a(M))=\sum_{p-q=a} (-1)^p [\mathcal E_p\circ\mathcal S^*_q(M)].$$
Therefore the statement follows from Lemma \ref{pass} and (\ref{eqvert}).
\end{proof}

Recall that by (\ref{bf}) the operators  $X_a$ and $X^*_a$ up to a shift coincide with the
generators $\psi_i,\psi^*_i$ of $\bold{Cf}$. Therefore  for an irreducible
$M\in \mathbb T^+$, we have $X_a[M]$ (or $X_a^*[M]$) is
either zero or equals the class of an irreducible object in $\mathbb T^+$ . This suggests the following.
\begin{conjecture} If $M\in\TT^+$ is irreducible, then the cohomology of each
  complex $\mathcal  X_a(M)$ and  $\mathcal  X_a^*(M)$ is nonzero in at most one degree
  and is an irreducible $\gg$-module. 
\end{conjecture}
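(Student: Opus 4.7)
The plan is to reduce the conjecture, for $M=V_\lambda$ with $\lambda\in\bold{Part}$, to a collection of finite-dimensional cohomology computations indexed by the simple $\gg$-modules appearing in the terms of $\mathcal X_a(V_\lambda)$ and $\mathcal X_a^*(V_\lambda)$. First, \refle{clif1} gives $\chi(\mathcal X_a(V_\lambda))=X_a\,s_\lambda$ in $\bold{H}^+$; by (\ref{bf}) the operator $X_a$ is, up to the shift $s$, a Clifford generator $\psi_i$, which acts on the Schur basis of any charge sector of $\bold{F}$ by either $0$ or $\pm s_\nu$ for a uniquely determined $\nu$ (this is essentially Bernstein's formula, or the Maya-diagram realization of the boson--fermion correspondence). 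Hence the Euler characteristic already predicts that the cohomology of $\mathcal X_a(V_\lambda)$ is either zero or equals $[V_\nu]$ for that specific $\nu$; the analogous statement holds for $\mathcal X_a^*(V_\lambda)$.

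Second, the terms of $\mathcal X_a(V_\lambda)$ lie in the semisimple category $\TT^+$, and the differential $\theta_a$ is $\gg$-equivariant by \refle{mainproperties}(a). Schur's lemma therefore yields a canonical decomposition
\begin{equation*}
\mathcal X_a(V_\lambda)\simeq\bigoplus_{\nu\in\bold{Part}} K^\bullet_\nu\otimes V_\nu,
\end{equation*}
where each $K^\bullet_\nu$ is a finite complex of vector spaces. By the Pieri rules, the degree-$q$ term of $K^\bullet_\nu$ has dimension equal to the number of partitions $\mu$ for which $\lambda/\mu$ is a vertical $q$-strip and $\nu/\mu$ is a horizontal $(q+a)$-strip. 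The conjecture is thus equivalent to the assertion that $K^\bullet_\nu$ is acyclic whenever $s_\nu$ does not occur in $X_a s_\lambda$, and has one-dimensional cohomology concentrated in a single degree otherwise. The case of $\mathcal X_a^*(V_\lambda)$ is parallel, with horizontal and vertical strips interchanged.

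Third, to compute the cohomology of $K^\bullet_\nu$ one unpacks $\theta_a$ combinatorially in terms of the strip-pair basis. The relations $\theta_a^2=d\theta_a=-\theta_a d$ from \refle{mainproperties}(c), together with the acyclicity of $(R_a,d)$, suggest two routes. One is to construct a contracting homotopy on $K^\bullet_\nu$ by transferring the $d$-homotopy on $R_a(V_\lambda)$ through the Schur projection onto the $\nu$-isotypic component. The other is to study the double complex $(R_a(V_\lambda),d,\theta_a)$ via a spectral sequence: since $d$-cohomology collapses, the $E_2$-page computes the desired cohomology as the cohomology of a signed box-move operator on strip-pairs. The principal obstacle, common to both routes, is sign-tracking: although the alternating count of strip-pairs contributing to $V_\nu$ is forced by the Euler characteristic to equal $0$ or $\pm 1$, identifying the unique surviving degree in the second case requires an explicit sign-reversing involution on the non-cancelling strip-pairs compatible with $\theta_a$. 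Producing this involution is essentially a categorical incarnation of Bernstein's formula, and is where the deepest combinatorial work lies.
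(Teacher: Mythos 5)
This statement is labeled a \emph{Conjecture} in the paper: the authors give no proof of it, so there is no argument of theirs to compare yours against. What you have written is a reduction, not a proof, and by your own admission the decisive step is missing. Your first two steps are sound and worth keeping: Lemma \ref{clif1} together with (\ref{bf}) forces $\chi(\mathcal X_a(V_\lambda))$ to be $0$ or $\pm s_\nu$ for a single $\nu$, and since the terms $\mathcal H_{q+a}\circ\mathcal E^*_q(V_\lambda)$ lie in the semisimple category $\TT^+$ and $\theta_a$ is $\gg$-equivariant by Lemma \ref{mainproperties}(a), the problem does reduce to finite complexes of multiplicity spaces $K^\bullet_\nu$ whose dimensions are counted by pairs of Pieri strips.

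The gap is that everything after this reduction is exactly the content of the conjecture. An Euler-characteristic computation can never establish concentration of cohomology in one degree: it is perfectly consistent with $\chi=\pm s_\nu$ (or $0$) that $V_\nu$, or some other $V_\mu$, survives in two adjacent cohomological degrees with cancelling signs, or with multiplicity greater than one. Of your two proposed routes for actually computing $H^\bullet(K^\bullet_\nu)$, neither is carried out, and each has a concrete obstruction. The homotopy-transfer route fails as stated because the contracting homotopy for $d$ on $R_a(V_\lambda)$ need not commute with $\theta_a$ nor preserve $\Ker d$, and the relation $\theta_a^2=d\theta_a=-\theta_a d$ alone does not let you transport acyclicity of $(R_a,d)$ into information about $\theta_a$-cohomology on $S_a(V_\lambda)=\Ker d\cap R_a(V_\lambda)$. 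The spectral-sequence route presupposes bicomplex-type compatibilities of the two differentials which the paper itself only states as Conjecture \ref{comrel}, so you would be assuming an unproved statement of comparable depth. The ``sign-reversing involution on non-cancelling strip-pairs compatible with $\theta_a$'' that you defer to is precisely the categorified Bernstein operator identity one would need; until it is constructed and shown compatible with the actual differential (not just with the count of strips), the conjecture remains open, as it does in the paper.
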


To obtain a categorification of the Clifford generators  $\psi_i,\psi^*_i$ we use the equation (\ref {bf}). First, we 
identify $\bold F$ with the Grothendieck group of 
$\mathbb{DT}^+$ by identifying
$\mathcal K(\mathbb{T}^+[m])$ with $\bold F(m)$ for $m\in\mathbb Z$. Denote by $\mathcal{S}$ the autoequivalence $\mathbb{DT}^+\to \mathbb{DT}^+$ arising from
shifting the grading:
$\mathcal S(M):=M[1]$. Define functors $\Psi_a: \mathbb{T}[m]\to \mathbb{T}[m+1]$ and $\Psi^*_a: \mathbb{T}[m]\to \mathbb{T}[m-1]$: 
$$\Psi_a:=\mathcal S\circ\mathcal X_{a+m},\,\Psi^*_a:=\mathcal S^{-1}\circ\mathcal X_{a-m}.$$ 
Using the semisimplicity of  $\mathbb{DT}$ we extend $\Psi_a$ and $\Psi^*_a$ by additivity to functors $\mathbb{DT}^+\to \mathbb{DT}^+$. 
To check that they satisfy the Clifford 
relations it suffices to categorify the identities (\ref{v1})--(\ref{v3}).

\section{Categorifying vertex operator identities}

In this section we categorify the identities (\ref{v1})--(\ref{v3}) by using the complexes of functors
$\mathcal X_a$ and $\mathcal X_a^*$.

Let us start with (\ref{v1}).
From Proposition \ref{prop2} we have the following exact sequences of linear endofunctors on $\TT^+$
$$0\rightarrow \mathcal H_m\circ \mathcal H_p\circ\mathcal E^*_n\circ \mathcal E^*_q\rightarrow
 \mathcal H_m\circ \mathcal E^*_n\circ\mathcal H_p\circ \mathcal
 E^*_q \rightarrow  \mathcal H_m\circ \mathcal H_{p-1}\circ
   \mathcal E^*_{n-1}\circ \mathcal E^*_q\rightarrow 0,$$
$$0\rightarrow \mathcal H_{p-1}\circ \mathcal H_m\circ\mathcal
  E^*_{q}\circ \mathcal E^*_{n-1}\rightarrow\mathcal H_{p-1}\circ
   \mathcal E^*_{q}\circ \mathcal H_{m}
\circ\mathcal E^*_{n-1}\rightarrow  \mathcal H_{p-1}\circ  \mathcal H_{m-1}\circ \mathcal
E^*_{q-1}\circ \mathcal E^*_{n-1}\rightarrow 0.$$

Combining these sequences we obtain a long exact sequence
\begin{equation}\label{longexact}
\dots\rightarrow \mathcal H_{m}\circ \mathcal E^*_{n}\circ \mathcal
H_{p}\circ \mathcal E^*_{q} 
\rightarrow  \mathcal H_{p-1}\circ \mathcal E^*_{q}\circ \mathcal
H_{m}\circ \mathcal E^*_{n-1}\rightarrow  \mathcal H_{m-1}\circ
\mathcal E^*_{n-1}\circ \mathcal H_{p-1}\circ \mathcal E^*_{q-1}\rightarrow\dots.
\end{equation}
 Consider the $\mathbb Z\times\mathbb Z$-graded functor
\begin{equation}\label{bicomplexfunctor}
\mathcal X_a\circ\mathcal X_b=\bigoplus_{n,q} \mathcal X_a(n)\circ\mathcal X_b(q),\nonumber
\end{equation}
where here and below $\mathcal K(n)$ stands for the $n$-th term of a complex of endofunctors $\mathcal K$.
Let
$$\chi(\mathcal X_a\circ\mathcal X_b)=\sum_{n,q}(-1)^{n+q} [\mathcal X_a(n)]\circ[\mathcal X_b(q)].$$
We rewrite  (\ref{longexact}) in the form
\begin{equation}\label{longexact1}
\dots\to \mathcal X_a(n)\circ\mathcal X_b(q)\to  \mathcal X_{b-1}(q)\circ\mathcal X_{a+1}(n-1)\to  \mathcal X_a(n-1)\circ\mathcal X_b(q-1)\to\cdots.
\end{equation}
Then (\ref{longexact1}) implies 
$$\chi(\mathcal X_a\circ\mathcal X_b)+\chi(\mathcal X_{b-1}\circ\mathcal X_{a+1})=0$$ 
which yields (\ref{v1}). The proof of  (\ref{v2}) is similar.

Note that we can define two morthisms of functors : 
$$\theta'_{ab}:\mathcal X_a(n)\circ\mathcal X_b(q)\to \mathcal X_a(n)\circ\mathcal X_b(q-1),$$
$$\theta''_{ab}:\mathcal X_a(n)\circ\mathcal X_b(q)\to \mathcal X_a(n-1)\circ\mathcal X_b(q)$$
by setting 
$$\theta'_{ab}\left(\mathcal X_a(n)\circ\mathcal X_b(q)(M)\right):=\mathcal X_a(n)(\theta_b( X_b(q)(M))),$$
$$\theta''_{ab}\left(\mathcal X_a(n)\circ\mathcal X_b(q)(M)\right):=\theta_a(\mathcal X_a(n)( X_b(q)(M)))$$
for every $M\in\TT^+$. It is easy to check that $(\theta'_{ab})^2=(\theta''_{ab})^2=0$, and moreover
we have the following.
\begin{conjecture}\label{comrel}
(a) $\mathcal X_a\circ\mathcal X_b$ is a bicomplex of functors with differentials  $\theta'_{ab},\theta''_{ab}$.

(b) (\ref{longexact1}) is the cone of a map of total complexes $\mathcal X_a\circ\mathcal X_b\to  \mathcal X_{b-1}\circ\mathcal X_{a+1}$ .

(c) Analogous statements hold for $\mathcal X_a^*\circ\mathcal X_b^*$.
\end{conjecture}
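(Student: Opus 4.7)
For part (a), the bicomplex structure reduces to the naturality of $\theta_a$ and $\theta_b$. Each $\theta_a$ is a natural transformation from $\mathcal X_a(n)$ to $\mathcal X_a(n-1)$, so for every morphism $\varphi \colon Y \to Y'$ in $\TT^+$ one has $\theta_a \circ \mathcal X_a(n)(\varphi) = \mathcal X_a(n-1)(\varphi) \circ \theta_a$. Substituting $\varphi = (\theta_b)_M$ yields $\theta''_{ab} \circ \theta'_{ab} = \theta'_{ab} \circ \theta''_{ab}$ on $\mathcal X_a(n) \circ \mathcal X_b(q)(M)$; introducing the standard Koszul sign $(-1)^n$ into $\theta'_{ab}$ converts this into the anticommutation required of a bicomplex. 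Combined with the already-noted vanishing $(\theta'_{ab})^2 = (\theta''_{ab})^2 = 0$ (which comes from $\theta_a^2 = \theta_b^2 = 0$ on $\Ker d$ by \refle{mainproperties}(c)), this proves (a).

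For part (b), I would construct $f$ from the short exact sequence \refeq{ex3} of \refprop{prop2}. Its surjection supplies a natural transformation
$$\mathcal E^*_n \circ \mathcal H_{q+b} \;\twoheadrightarrow\; \mathcal H_{q+b-1} \circ \mathcal E^*_{n-1},$$
and sandwiching it between $\mathcal H_{n+a}$ on the left and $\mathcal E^*_q$ on the right, followed by the natural flip isomorphisms $\mathcal H_p \circ \mathcal H_r \simeq \mathcal H_r \circ \mathcal H_p$ and $\mathcal E^*_n \circ \mathcal E^*_q \simeq \mathcal E^*_q \circ \mathcal E^*_n$ (induced by the symmetries $S^p \otimes S^r \simeq S^r \otimes S^p$ and $\Lambda^n_* \otimes \Lambda^q_* \simeq \Lambda^q_* \otimes \Lambda^n_*$), produces a component
$$f_{n,q} \colon \mathcal X_a(n) \circ \mathcal X_b(q) \;\longrightarrow\; \mathcal X_{b-1}(q) \circ \mathcal X_{a+1}(n-1).$$
One then checks that $f = \bigoplus_{n,q} f_{n,q}$ commutes with both bicomplex differentials up to the prescribed Koszul signs; this reduces to an explicit comparison on the modules $R_a, R_b$, since both the swap implementing \refeq{ex3} and the differentials $\theta_a, \theta_b$ are assembled from contractions $\langle \,\cdot\,, \,\cdot\, \rangle$ whose (anti)commutation can be verified combinatorially. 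Totalizing and passing to the cone then produces a complex whose strands reproduce \refeq{longexact1}: the morphism $A_{n,q} := \mathcal X_a(n) \circ \mathcal X_b(q) \to B_{n,q} := \mathcal X_{b-1}(q) \circ \mathcal X_{a+1}(n-1)$ is $f_{n,q}$, while the next morphism $B_{n,q} \to A_{n-1,q-1}$ arises as the surjection of a second instance of \refeq{ex3} applied inside $B_{n,q}$.

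Part (c) follows by the symmetric construction using \refeq{ex4} in place of \refeq{ex3}; alternatively, the involution $(\,\cdot\,)_* \colon \TT \to \TT$ of Section~3 exchanges $\mathcal H_n \leftrightarrow \mathcal E_n$ and $\mathcal H^*_n \leftrightarrow \mathcal E^*_n$ and transports (a)--(b) for $\mathcal X_a \circ \mathcal X_b$ to the parallel statement for $\mathcal X^*_a \circ \mathcal X^*_b$. The main obstacle is in (b): showing that $f$ is genuinely a morphism of bicomplexes with the correct signs and then matching the cone of $f$ to \refeq{longexact1} edge by edge. The delicate point is that at each step one must identify a new quotient with the next $A_{n-k, q-k}$ or $B_{n-k, q-k}$, which requires threading multiple applications of \refeq{ex3} through the same bigraded object while tracking signs coming both from the Koszul convention on the bicomplex and from the contractions defining $\theta_a, \theta_b$ and $f$. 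The fact that the authors record the claim as a conjecture rather than a proposition suggests that this iterated cone identification, though plausible on formal grounds, has not been pushed through in full detail.
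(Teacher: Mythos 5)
You should first be aware that the paper offers no proof of this statement: it is recorded precisely as Conjecture \ref{comrel} and left open, so there is no argument of the authors to compare yours against; the only question is whether your proposal closes the gap, and it does not. Part (a) is the most convincing piece: granting that $\theta_a$ and $\theta_b$ are genuine natural transformations (the paper asserts without proof that $\mathcal X_a$ is a complex of endofunctors; naturality of the explicitly, projector-dependently defined $\theta_a$ ultimately needs a Schur--Weyl-type argument that morphisms in $\TT^+$ commute with the contraction--insertion operators), the interchange $\theta''_{ab}\circ\theta'_{ab}=\theta'_{ab}\circ\theta''_{ab}$ is indeed just the naturality square of $\theta_a$ evaluated at the morphism $(\theta_b)_M$, and a Koszul sign turns commutation into a bicomplex. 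But even here you are assuming exactly the compatibility checks the authors chose not to carry out.

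The real gap is in (b). Your $f_{n,q}$ is the right candidate map (surjection coming from \refeq{ex3}, flips of $\mathcal H\circ\mathcal H$ and $\mathcal E^*\circ\mathcal E^*$, then the injection from a second instance of \refeq{ex3}), but everything that would make (b) a theorem is deferred to ``one then checks'': (i) that $f=\bigoplus_{n,q}f_{n,q}$ commutes with both internal differentials $\theta'_{ab},\theta''_{ab}$ with consistent signs, and (ii) that $\operatorname{Cone}(f)$ can be identified with \refeq{longexact1}. Point (ii) is not an ``edge by edge'' matching: \refeq{longexact1} is a single diagonal strand (with $n-q$ fixed) whose differentials are only the splice maps $\mathcal X_a(n)\circ\mathcal X_b(q)\to\mathcal X_{b-1}(q)\circ\mathcal X_{a+1}(n-1)\to\mathcal X_a(n-1)\circ\mathcal X_b(q-1)$, whereas the cone of a map of total complexes has graded pieces summing over whole antidiagonals $n+q=\mathrm{const}$ and a differential containing, besides $f$, the internal differentials of both bicomplexes; in particular a cone has no component going from the $\mathcal X_{b-1}\circ\mathcal X_{a+1}$ part back to the $\mathcal X_a\circ\mathcal X_b$ part, so one must explain how the backward splice maps arise from (or are compatible with) this data, e.g.\ by showing the cone decomposes into such strands --- and you give no argument for this. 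For (c), your fallback via the involution $(\;\cdot\;)_*$ is incorrect: it sends $V_{\lambda,\mu}$ to $V_{\mu,\lambda}$, hence $S^n$ to $(S^n)_*$, does not exchange $\mathcal H_n$ with $\mathcal E_n$, and does not even preserve $\TT^+$; only your first suggestion (redo the construction with \refeq{ex4}) is viable. As you concede in your last sentence, the sign and cone bookkeeping has not been pushed through --- but that bookkeeping is precisely the content of the conjecture, so what you have is a plausible plan, not a proof.
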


It remains to prove (\ref{v3}). Define the functors  $\mathcal Z_{a,b}(n,q):\TT^+\to \TT^+$ by setting
$$\mathcal Z_{a,b}(n,q):=\mathcal H_{a+n}\circ  \mathcal E_{b+q}\circ \mathcal E^*_{n}\circ \mathcal H^*_{q}.$$
We rewrite (\ref{ex1}) and (\ref{ex2}) in the form
\begin{equation}\label{equ11}
0\to \mathcal Z_{a,b}(n,q)\to \mathcal X_a(n)\circ\mathcal X^*_b(q)\to \mathcal X_{a+1}(n-1)\circ\mathcal X^*_{b-1}(q)\to 0,
\end{equation}
\begin{equation}\label{equ22}
0\to \mathcal Z_{a,b}(n,q)\to \mathcal X^*_b(n)\circ\mathcal X_a(q)\to \mathcal X^*_{b+1}(q-1)\circ\mathcal X_{a-1}(n)\to 0.
\end{equation}

\begin{conjecture} $\mathcal Z_{a,b}=\bigoplus_{n,q}\mathcal Z_{a,b}(n,q)$ has the structure of a bicomplex,
and the morphisms in (\ref{equ11}) and (\ref{equ22}) commute with the differentials.
\end{conjecture}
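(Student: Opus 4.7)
The idea is to build the bicomplex structure on $\mathcal Z_{a,b}$ by restriction from an ambient bicomplex on $\mathcal X_a\circ\mathcal X^*_b$, following the scheme proposed in Conjecture \ref{comrel}. As a preliminary step, I would extend Conjecture \ref{comrel}(a) to the mixed product $\mathcal X_a\circ\mathcal X^*_b$: define $\theta'_{ab}:\mathcal X_a(n)\circ\mathcal X^*_b(q)\to\mathcal X_a(n)\circ\mathcal X^*_b(q-1)$ by applying $\theta^*_b$ to the second factor and $\theta''_{ab}:\mathcal X_a(n)\circ\mathcal X^*_b(q)\to\mathcal X_a(n-1)\circ\mathcal X^*_b(q)$ by applying $\theta_a$ to the first factor. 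Since $\theta_a$ and $\theta^*_b$ act on disjoint factors of the composition $\mathcal H_{a+n}\circ\mathcal E^*_n\circ\mathcal E_{b+q}\circ\mathcal H^*_q$, Lemma \ref{mainproperties}(c) yields $(\theta'_{ab})^2=(\theta''_{ab})^2=0$ and $\theta'_{ab}\theta''_{ab}+\theta''_{ab}\theta'_{ab}=0$.

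The main step, and the principal technical obstacle, is to show that the subfunctor $\mathcal Z_{a,b}(n,q)\hookrightarrow\mathcal X_a(n)\circ\mathcal X^*_b(q)$ coming from (\ref{equ11}) is stable under both $\theta'_{ab}$ and $\theta''_{ab}$. At the level of $\gg$-modules the inclusion is governed by the socle embedding $V_{1_{b+q},1_n}=\soc(\Lambda^{b+q}\otimes(\Lambda^n)_*)$ of (\ref{exx2}), and one must verify that the contractions $\theta_a,\theta^*_b$ preserve this socle. I would do this by working in the tensor realization of Section 6: via Proposition \ref{prop1} express $\mathcal Z_{a,b}(n,q)(M)$ as the image of a suitable antisymmetrizer acting on a sufficiently large tensor power, write $\theta_a$ and $\theta^*_b$ in the explicit contraction form from Section 6, and invoke the $\mathbb S_k$-equivariance of Lemma \ref{mainproperties}(a) to conclude that the contractions commute with the antisymmetrizing projection. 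The sign bookkeeping in combining antisymmetrizers with the index-shifting contractions is, I expect, the hardest part.

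Once stability is established, $\mathcal Z_{a,b}$ inherits a bicomplex structure, and the inclusion in (\ref{equ11}) is a map of bicomplexes by construction. Compatibility of the quotient map in (\ref{equ11}) with the differentials then follows by naturality, provided one also checks that the cokernel morphism in (\ref{exx2}), tensored with the identity on the unchanged $\mathcal H_{a+n}$ and $\mathcal H^*_q$ factors, intertwines the $\theta$-differentials on $\mathcal X_a\circ\mathcal X^*_b$ and $\mathcal X_{a+1}\circ\mathcal X^*_{b-1}$. Both checks reduce to the same type of $\mathbb S$-equivariant tensor computation as in the previous paragraph.

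The analogous argument treats (\ref{equ22}), with (\ref{exx1}) replacing (\ref{exx2}). A final consistency check is needed to identify the two bicomplex structures on $\mathcal Z_{a,b}$ obtained from (\ref{equ11}) and (\ref{equ22}); I would expect them both to agree with the intrinsic structure in which $\theta_a$ and $\theta^*_b$ act independently on the pairs of factors $(\mathcal H_{a+n},\mathcal E^*_n)$ and $(\mathcal E_{b+q},\mathcal H^*_q)$ of the composition $\mathcal H_{a+n}\circ\mathcal E_{b+q}\circ\mathcal E^*_n\circ\mathcal H^*_q$ defining $\mathcal Z_{a,b}(n,q)$. If this intrinsic description is adopted as the definition of the differentials, the first three paragraphs amount to proving that both inclusions (\ref{equ11}) and (\ref{equ22}) are compatible with it, which would be the cleanest conceptual formulation of the conjecture.
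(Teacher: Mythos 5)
First, a point of reference: the paper does not prove this statement. It is left as an open conjecture (as is Conjecture \ref{comrel}, whose part (a) your preliminary step silently extends to mixed products), so your outline cannot be measured against an argument in the text; it has to stand on its own, and as written it does not. Your first step already asserts more than you justify: you claim $(\theta'_{ab})^2=(\theta''_{ab})^2=0$ and $\theta'_{ab}\theta''_{ab}+\theta''_{ab}\theta'_{ab}=0$ ``since $\theta_a$ and $\theta^*_b$ act on disjoint factors'', citing Lemma \ref{mainproperties}(c). That lemma only relates $\theta_a$ to the Koszul differential $d$ on a single $R_a$; it says nothing about a composition of two complexes of functors, and the bicomplex property of such compositions is exactly what the paper is unable to prove and records as Conjecture \ref{comrel}(a). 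Indeed, since one differential is the inner complex's differential transported through the outer functor and the other is the outer differential evaluated on the object, what naturality would give (if $\theta_a$, $\theta^*_b$ are honestly morphisms of endofunctors, independent of the chosen embeddings $M\subset T^\cdot(V)^{\oplus k}$ and projectors $\pi_M$ --- a point you take for granted and which is part of the difficulty) is commutation, to be corrected by signs; ``anticommutation by Lemma \ref{mainproperties}(c)'' is not an argument.

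More importantly, the actual content of the conjecture --- that the subfunctor $\mathcal Z_{a,b}(n,q)\hookrightarrow\mathcal X_a(n)\circ\mathcal X^*_b(q)$ cut out via the socle embedding coming from (\ref{exx2}) is stable under both differentials, that the quotient map onto $\mathcal X_{a+1}(n-1)\circ\mathcal X^*_{b-1}(q)$ in (\ref{equ11}) intertwines them, likewise for (\ref{equ22}) via (\ref{exx1}), and that the two induced structures on $\mathcal Z_{a,b}$ coincide --- is precisely the part you defer (``I would do this by\dots'', ``the sign bookkeeping\dots is the hardest part''). The strategy you sketch (realize everything inside tensor powers, write $\theta_a,\theta^*_b$ as explicit contractions, use the $\mathbb S_k$-equivariance of Lemma \ref{mainproperties}(a) and Proposition \ref{prop1} to pass through the symmetrizers and the functor $(\cdot)^+$) is reasonable and consistent with the paper's framework, but the crucial verification --- that the contraction which inserts vectors into the tensor slot already processed by the inner functor does not move you out of the socle-defined subobject, with consistent signs --- is never carried out. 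So what you have is a plausible plan of attack, not a proof; the statement remains open after your proposal exactly as it does in the paper.
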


Set
\begin{equation}\label{bicomplexfunctor1}
\mathcal X_a\circ\mathcal X^*_b=\bigoplus_{n,q} \mathcal X_a(n)\circ\mathcal X^*_b(q),\,\,
\mathcal X_b^*\circ\mathcal X_a=\bigoplus_{n,q} \mathcal X^*_b(q)\circ\mathcal X_a(n).\nonumber
\end{equation}

Observe that for any fixed $M\in\TT^+$ we have  $\mathcal Z_{a,b}(M)=0$ for
sufficiently large negative $b$.
Therefore (\ref{equ11}) implies
$$\mathcal X_a\circ\mathcal X^*_b\simeq\bigoplus_{i>0}\mathcal Z_{a+i,b-i}.$$
Similarly by  (\ref{equ22})  we have
$$\mathcal X^*_b\circ\mathcal X_a\simeq\bigoplus_{i<0}\mathcal Z_{a+i,b-i}.$$
Hence 
\begin{equation}\label{sum}
\mathcal X_a\circ\mathcal X^*_b\oplus \mathcal
X^*_{b+1}\circ\mathcal X_{a-1}=\bigoplus_{i\in\mathbb Z}\mathcal Z_{a+i,b-i}.\nonumber
\end{equation}

Set $$\chi( \mathcal Z_{a,b})=\sum(-1)^{n+q}[ \mathcal Z_{a,b}(n,q)].$$

\begin{lemma}\label{zchar}
$$\sum_{i\in\mathbb Z}\chi(\mathcal Z_{a+i,b-i})=\delta_{a+b,0}\operatorname{Id}.$$
\end{lemma}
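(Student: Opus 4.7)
The plan is to reduce the claim to the Koszul-type identities $H(u)E(-u)=1$ and $E^*(-u)H^*(u)=1$, i.e.\ to (\ref{equ1}) and (\ref{equ2}), via a generating-function argument.

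First I would use Lemma \ref{pass} to express $\chi(\mathcal Z_{a,b})$ as an operator on $\bold H^+$, of the form $\sum_{n,q\geq 0}\epsilon\cdot h_{a+n}e_{b+q}e^*_n h^*_q$, where the sign $\epsilon$ is inherited from the bigraded Euler-characteristic convention on the bicomplex $\mathcal Z_{a,b}$ sitting inside $\mathcal X_a\circ\mathcal X^*_b$. Assembling the double generating series by the substitutions $p=a+n$, $r=b+q$ and recognizing the series for $H(z)$, $E(-w)$, $E^*(-z)$, $H^*(w)$, I would arrive at
$$F(z,w)\;:=\;\sum_{a,b\in\mathbb Z}z^a w^b\,\chi(\mathcal Z_{a,b})\;=\;H(z)\,E(-w)\,E^*(-z)\,H^*(w)$$
as an operator-valued formal Laurent series.

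Next I would identify $\sum_{i\in\mathbb Z}\chi(\mathcal Z_{a+i,b-i})$ with the coefficient of $u^{a+b}$ in the diagonal specialization $F(u,u)$. The specialization is a well-defined operator-valued formal series in $u$: for any fixed $\xi\in\bold H^+$ the operators $e^*_n$ and $h^*_q$ annihilate $\xi$ for sufficiently large $n,q$, so only finitely many terms act nontrivially on $\xi$ and there is no divergence at $z=w$ (unlike in the naive product $X(z)X^*(w)$).

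Finally, since $H(u)$ commutes with $E(-u)$ (both act by multiplication by symmetric functions) and $E^*(-u)$ commutes with $H^*(u)$ (both lie in the commutative subalgebra $\bold H^+_*$), I can regroup
$$F(u,u)\;=\;\bigl(H(u)E(-u)\bigr)\bigl(E^*(-u)H^*(u)\bigr).$$
Now (\ref{equ1}) gives $H(u)E(-u)=1$ and (\ref{equ2}) gives $E^*(-u)H^*(u)=1$; both were categorified in Lemma \ref{lemma5}. Hence $F(u,u)=\operatorname{Id}$ as an operator-valued series constant in $u$, and extracting $[u^s]$ yields $\sum_{a+b=s}\chi(\mathcal Z_{a,b})=\delta_{s,0}\operatorname{Id}$, as required.

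The main obstacle is the sign bookkeeping in the first step: the bigraded Euler characteristic of $\mathcal Z_{a,b}$ must produce the generating series $E(-w)$ and $E^*(-z)$ (with the minus signs inside) rather than $E(w)$ and $E^*(z)$, since only then does the diagonal factor into the two Koszul pieces that collapse to $1$. The correct sign conventions are forced by the $(-1)^p$ used for $\chi(\mathcal X^*_a)$ in Lemma \ref{clif1} (with $p$ the exterior-power index), and propagating this through the bicomplex filtration of $\mathcal X_a\circ\mathcal X^*_b$ by the $\mathcal Z_{a+i,b-i}$ yields the signs needed for the factorization to succeed.
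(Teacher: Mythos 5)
Your proposal is correct and is essentially the paper's own argument: the paper likewise multiplies the two Koszul identities $\sum_{m+p=k}(-1)^p[\mathcal H_m\circ\mathcal E_p]=\delta_{k,0}\operatorname{Id}$ and $\sum_{n+q=l}(-1)^q[\mathcal E^*_n\circ\mathcal H^*_q]=\delta_{l,0}\operatorname{Id}$ (Lemmas \ref{lemma5} and \ref{pass}) and sums over $k-l=s$, which is exactly your factorization $F(u,u)=\bigl(H(u)E(-u)\bigr)\bigl(E^*(-u)H^*(u)\bigr)=\operatorname{Id}$ written without generating series. Your generating-function phrasing, the finiteness remark, and the sign bookkeeping are all consistent with what the paper does, so there is nothing substantive to add.
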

\begin{proof} Using the complex $\mathcal C_k$ (see Lemma \ref{lemma5}) we obtain
$$\sum_{m+p=k} (-1)^p [\mathcal H_m\circ \mathcal E_p]=\delta_{k,0}\operatorname{Id},$$
and from the complex $(\mathcal C_l)_*$ we obtain
$$\sum_{n+q=l} (-1)^q  [\mathcal E^*_n\circ \mathcal H^*_q]=\delta_{l,0}\operatorname{Id}.$$
Combining the both above identities we have
$$\sum_{m+p=k,n+q=l} (-1)^{p+q} [\mathcal H_m\circ  \mathcal E_p\circ  \mathcal E^*_{n}\circ \mathcal H^*_{q}]=\delta_{k,0}\delta_{l,0}\operatorname{Id}.$$
After summation over all $k-l=s$ we get
$$\sum_{m+p-n-q=s} (-1)^{p+q} [\mathcal H_m\circ  \mathcal E_p\circ \mathcal E^*_n\circ \mathcal H^*_{q}]=\delta_{s,0}\operatorname{Id}.$$
The left-hand side of the last identity is equal to
$\sum_{i\in\mathbb Z}\chi(\mathcal Z_{s+i,-i})$.
Hence the statement.
\end{proof}
\begin{corollary} We have
$$\chi(\mathcal X_a\circ\mathcal X^*_b)+\chi(\mathcal  X^*_{b+1}\circ\mathcal X_{a-1})=\delta_{a+b,0}\operatorname{Id}.$$
Hence (\ref{v3}) holds.
\end{corollary}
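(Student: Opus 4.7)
The plan is to apply the Euler characteristic $\chi$ directly to the direct-sum decomposition $\mathcal X_a\circ\mathcal X^*_b\oplus \mathcal X^*_{b+1}\circ\mathcal X_{a-1}\simeq\bigoplus_{i\in\mathbb Z}\mathcal Z_{a+i,b-i}$ established immediately before the statement, and then to invoke Lemma \ref{zchar} to collapse the resulting sum. Concretely, additivity of $\chi$ under direct sums of bicomplexes of functors converts the left-hand side into $\chi(\mathcal X_a\circ\mathcal X^*_b)+\chi(\mathcal X^*_{b+1}\circ\mathcal X_{a-1})$ and the right-hand side into $\sum_{i\in\mathbb Z}\chi(\mathcal Z_{a+i,b-i})$, which by Lemma \ref{zchar} equals $\delta_{a+b,0}\operatorname{Id}$. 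This yields the first displayed identity of the corollary.

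For the deduction of (\ref{v3}), I would first unpack the vertex identity at the level of Fourier coefficients. Expanding both sides of $X(z)X^*(w)+(z/w)X^*(w)X(z)=\sum_{n\in\mathbb Z}z^n/w^n$ and, after a short reindexing of the $(z/w)$-term, matching the coefficient of $z^aw^b$, one sees that (\ref{v3}) is equivalent to the family of identities $X_aX^*_b+X^*_{b+1}X_{a-1}=\delta_{a+b,0}\operatorname{Id}$ for every $(a,b)\in\mathbb Z^2$. Next I would invoke Lemma \ref{clif1} (together with Lemma \ref{pass}) to identify $\chi(\mathcal X_c)=X_c$ and $\chi(\mathcal X^*_d)=X^*_d$ as operators on $\bold{H}^+\simeq\mathcal K(\mathbb T^+)$; compatibility of $\chi$ with composition then transforms the first assertion of the corollary precisely into this coefficient identity.

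The only real bookkeeping, and the closest thing to an obstacle, is multiplicativity of $\chi$ under composition of complexes of exact linear endofunctors, i.e.\ that $\chi(\mathcal F\circ\mathcal G)=\chi(\mathcal F)\circ\chi(\mathcal G)$ in $\mathrm{End}(\mathcal K(\mathbb T^+))$. This is routine given that the building blocks $\mathcal H_n,\mathcal E_n,\mathcal H^*_n,\mathcal E^*_n$ are all exact (via Corollary \ref{cor2} and exactness of tensor product), so that composition of complexes descends to ordinary multiplication of classes at the Grothendieck level. Apart from this, there is no substantive difficulty: the genuine content has already been assembled in Lemma \ref{zchar} (via two Koszul-type complexes) and in the direct-sum decomposition itself, which rested on the exact sequences (\ref{equ11}) and (\ref{equ22}) together with the eventual vanishing of $\mathcal X_{a+k}\circ\mathcal X^*_{b-k}$ on any fixed object of $\mathbb T^+$ as $|k|$ grows.
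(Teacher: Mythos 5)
Your proposal is correct and follows essentially the same route as the paper: apply $\chi$ to the decomposition $\mathcal X_a\circ\mathcal X^*_b\oplus \mathcal X^*_{b+1}\circ\mathcal X_{a-1}\simeq\bigoplus_{i\in\mathbb Z}\mathcal Z_{a+i,b-i}$, invoke Lemma \ref{zchar}, and translate to (\ref{v3}) by matching coefficients of $z^aw^b$ via Lemma \ref{clif1}. The only cosmetic difference is that the ``multiplicativity'' of $\chi$ you flag as a bookkeeping step is in the paper built into the definition $\chi(\mathcal X_a\circ\mathcal X^*_b)=\sum_{n,q}(-1)^{n+q}[\mathcal X_a(n)]\circ[\mathcal X^*_b(q)]$, so nothing extra needs to be verified there.
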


\section{Categorification of Weyl algebra}
We will now construct  certain graded endofunctors $\mathcal P_n,\mathcal P^*_n$ categorifying
$p_n,p_n^*$, and will use   $\mathcal P_n,\mathcal P^*_n$ to show that $p_n$ and $p^*_n$ satisfy the relations
(\ref{boson}). Recall that the diagrams $(p,1_q)$ are called {\it hooks}.
Define $\mathcal P_k(i):=\mathcal T_{V_{k-i,1_i}}$, $\mathcal P_k(i):=\mathcal D_{V_{k-i,1_i}}$ and set
$$\mathcal P_k:=\bigoplus_{i=0}^{k-1}\mathcal P_k(i),\,\,\,\mathcal P^*_k:=\bigoplus_{i=0}^{k-1}\mathcal P^*_k(i).$$

Consider again the complexes of functors (\ref{le5eq1}) and
(\ref{le5eq2}) with differentials $\delta$ and $\delta^*$.

\begin{lemma}\label{hookkoszul} We have the following exact sequence of functors
$$0\to\mathcal P_k(i)\to\mathcal H_{k-i}\circ\mathcal E_i\xrightarrow{\delta}\mathcal H_{k-i-1}\circ\mathcal E_{i+1},$$ 
and
$$0\to\mathcal P^*_k(i)\to\mathcal H^*_{k-i}\circ\mathcal E^*_i\xrightarrow{\delta}\mathcal H^*_{k-i-1}\circ\mathcal E^*_{i+1}.$$
\end{lemma}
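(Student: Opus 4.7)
The plan is to reduce both claimed exact sequences to corresponding short exact sequences in $\mathbb{T}^+$ at the level of the Koszul resolutions $C_k$ and $(C_k)_*$ of \refle{lemma5}, and then extract the kernels by combining Pieri's rule with Schur's lemma.

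First I would invoke the isomorphism $\mathcal T_M\circ\mathcal T_N\simeq\mathcal T_{M\otimes N}$ from Section 4 and its dual $\mathcal D_M\circ\mathcal D_N\simeq\mathcal D_{M\otimes N}$ from \refle{lemma3} to rewrite
\[
\mathcal H_{k-i}\circ\mathcal E_i \simeq \mathcal T_{S^{k-i}(V)\otimes\Lambda^i(V)},\qquad \mathcal H^*_{k-i}\circ\mathcal E^*_i \simeq \mathcal D_{S^{k-i}(V)\otimes\Lambda^i(V)}.
\]
Since both $\mathcal T$ and $\mathcal D$ are exact functors and $\mathbb{T}^+$ is semisimple, it suffices to identify the kernel of the Koszul differential at the relevant position of $C_k$ (respectively $(C_k)_*$) with the hook Schur module $V_{(k-i,1^i)}$ (respectively its dual).

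The key combinatorial input is the Pieri decomposition
\[
S^{a}(V)\otimes\Lambda^{b}(V) \;\simeq\; V_{(a+1,1^{b-1})}\oplus V_{(a,1^{b})} \qquad (a,b\ge 1),
\]
so that every term of $C_k$ is a direct sum of at most two hook Schur modules. Exactly one hook summand is common to two consecutive terms of $C_k$, and by Schur's lemma the $\gg$-equivariant differential $\delta$ necessarily vanishes on the non-common source summand and acts by a scalar on the common one. Exactness of $C_k$ for $k\ge 1$, established in \refle{lemma5}, forces this scalar to be non-zero, so $\delta$ restricts to an isomorphism between the common summands. Reading off the kernel of $\delta$ at each position then singles out the hook module $\mathcal P_k(i)$, and applying $\mathcal T$ produces the first exact sequence of linear endofunctors on $\mathbb{T}^+$. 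The second sequence follows by applying the outer involution $(\cdot)_*:\mathbb{T}\to\mathbb{T}$ of Section 3, which interchanges $C_k$ with $(C_k)_*$ and converts $\mathcal T_N$ into $\mathcal D_{N_*}$, hence $\mathcal H_n,\mathcal E_n$ into $\mathcal H^*_n,\mathcal E^*_n$.

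The main obstacle is the index bookkeeping in matching hook summands across adjacent positions of the Koszul complex, which must be done carefully to ensure that the kernel of $\delta$ is indeed the summand $\mathcal P_k(i)$ prescribed by the statement; once this matching is in place, the rest is a routine consequence of semisimplicity of $\mathbb{T}^+$ and of \refle{lemma5}.
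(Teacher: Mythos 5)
Your overall strategy is the same as the paper's: reduce to an exact sequence of modules in $\TT$ exhibiting the hook $V_{(k-i,1_i)}$ as a kernel, and then apply the exact, monoidal assignments $N\mapsto\mathcal T_N$, $N\mapsto\mathcal D_N$ (using $\mathcal T_{M\otimes N}\simeq\mathcal T_M\circ\mathcal T_N$, Lemma \ref{lemma3}, and exactness of $(\cdot)^+$) together with the involution $(\cdot)_*$ for the starred sequence; the only difference is that the paper simply cites the module-level hook sequence (\cite{FH}, Exercise 6.20) while you rederive it from Pieri's rule, Schur's lemma and the exactness of the Koszul complex of Lemma \ref{lemma5}. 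That derivation scheme is sound, and those reduction steps are correctly invoked.

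However, the step you defer as ``index bookkeeping'' is exactly where the argument does not close, and it is not routine. With the differential $\delta$ of (\ref{le5eq1}), i.e.\ the Koszul map $S^{k-i}\otimes\Lambda^{i}\to S^{k-i-1}\otimes\Lambda^{i+1}$ lowering the symmetric degree, your own Pieri/Schur analysis gives: the constituent common to source and target is $V_{(k-i,1_i)}$, exactness forces $\delta$ to be injective on it, and hence $\ker\delta$ is the other summand $V_{(k-i+1,1_{i-1})}$, i.e.\ it corresponds to $\mathcal P_k(i-1)$ rather than $\mathcal P_k(i)$. (Sanity check at $i=0$: the identification you assert would say that the first map $\mathcal H_k\to\mathcal H_{k-1}\circ\mathcal E_1$ of the complex (\ref{le5eq1}) has kernel all of $\mathcal H_k$, contradicting the exactness claimed in Lemma \ref{lemma5}.) The hook $V_{(k-i,1_i)}$ arises instead as the kernel of the map in the opposite (multiplication, or co-Koszul) direction $S^{k-i}\otimes\Lambda^{i}\to S^{k-i+1}\otimes\Lambda^{i-1}$; this is the sequence the paper actually cites from \cite{FH} (whose displayed indices in the paper are themselves off by one box), and its left-exactness rests not on $C_k$ but on the companion Koszul complex $0\to\Lambda^k\to\Lambda^{k-1}\otimes S^1\to\dots\to S^k\to 0$, which you would have to invoke or prove by the same Pieri/Schur argument. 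So either work with that complex, or state the conclusion with the index shifted (equivalently with $\delta$ reversed); as written, your proposal asserts, but a completed computation would refute, the identification $\ker\delta=\mathcal P_k(i)$ for the differential you chose. None of this affects the later use of the lemma, since only the classes $[\mathcal P_k(i)]$ enter Lemma \ref{cath}, but a proof must get the direction of the differential right.
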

\begin{proof} The first exact sequence follows from the following well-known exact sequence (see, for instance, \cite{FH}, Exercise 6.20)
$$0\to V_{p,1_q}\to S^p\otimes \Lambda^{q-1}\to  S^{p-1}\otimes \Lambda^{q}.$$
The second one follows from the exact sequence
$$0\to (V_*)_{p,1_q}\to S_*^p\otimes \Lambda_*^{q-1}\to  S_*^{p-1}\otimes \Lambda_*^{q}.$$
\end{proof}

Lemma \ref{hookkoszul} motivates us to define $\mathcal P_k$ and $\mathcal P^*_k$ as complexes of endofunctors in $\TT^+$ with zero differentials.
As in the fermion case we will consider $\mathcal P_k$ and $\mathcal P^*_k$ as functors from  $\mathbb{DT}^+$ to $\mathbb{DT}^+$.

\begin{lemma}\label{cath}
$$\chi\circ\mathcal P_k=p_k\circ\chi,\;\;\chi\circ\mathcal P^*_k=p^*_k\circ\chi.$$
\end{lemma}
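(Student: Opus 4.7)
The plan is to compute $\chi\circ\mathcal P_k$ directly and match it with $p_k$ via the isomorphism $\mathcal K(\mathbb T^+)\cong\bold H^+$ established in \refle{grfock}. Since $\mathcal P_k$ is defined as a complex with zero differentials and $\mathcal P_k(i)=\mathcal T_{V_{k-i,1_i}}$ sits in homological degree $i$ for $0\le i\le k-1$, its Euler characteristic is
\begin{equation*}
\chi\circ\mathcal P_k=\Bigl(\sum_{i=0}^{k-1}(-1)^i[\mathcal T_{V_{k-i,1_i}}]\Bigr)\circ\chi
\end{equation*}
as an operator identity on $\bold H^+$. Combining the observation at the start of the proof of \refle{pass} ($[\mathcal T_N]=\operatorname{Sch}(N)$ for $N\in\mathbb T^+$) with \refle{grfock}, each $[\mathcal T_{V_{k-i,1_i}}]$ is identified with multiplication by the hook Schur function $s_{(k-i,1_i)}$.

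Hence the first assertion reduces to the classical symmetric-function identity
\begin{equation*}
\sum_{i=0}^{k-1}(-1)^i s_{(k-i,1_i)}=p_k,
\end{equation*}
which is the Murnaghan--Nakayama expansion of a power-sum in hook Schur functions (see \cite{M}). A derivation internal to the paper is also available: \refle{hookkoszul} realizes each $V_{k-i,1_i}$ as the kernel of a differential in the exact Koszul complex $C_k$ of \refle{lemma5}, so a standard telescoping in the Grothendieck ring rewrites $\sum(-1)^i s_{(k-i,1_i)}$ as an alternating combination of $h_{k-i}e_i$, which via the logarithmic-derivative form of \refeq{symid} is exactly $p_k$.

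The starred identity $\chi\circ\mathcal P_k^*=p_k^*\circ\chi$ is then essentially formal. By \refcor{coradj}, each $[\mathcal D_{V_{k-i,1_i}}]$ is the adjoint of $[\mathcal T_{V_{k-i,1_i}}]$ with respect to the Schur inner product on $\bold H^+$ (for which $\{s_\lambda\}$ is orthonormal); since adjunction is $\mathbb C$-linear,
\begin{equation*}
[\mathcal P_k^*]=\sum_{i=0}^{k-1}(-1)^i[\mathcal T_{V_{k-i,1_i}}]^{*}=(p_k)^{*}=p_k^*,
\end{equation*}
the last equality being the statement recorded in Section 2 that $p_n^*$ is the operator dual to $p_n$. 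The only genuinely non-routine step in the whole argument is the Murnaghan--Nakayama hook-sum expansion of $p_k$; everything else is a direct unwinding of the dictionaries built in \refle{grfock}, \refle{pass}, and \refcor{coradj}.
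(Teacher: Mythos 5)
Your proposal is correct and takes essentially the same route as the paper: both arguments rest on the classical hook expansion $p_k=\sum_{i=0}^{k-1}(-1)^i s_{(k-i,1_i)}$ from \cite{M} combined with the identification of $[\mathcal T_{V_{k-i,1_i}}]$ with multiplication by the hook Schur function (via \refle{pass} and \refle{grfock}), and both dispose of the starred identity by duality as in \refcor{coradj}. The only cosmetic points are that Section 2 records the duality explicitly for $h_n,e_n$ rather than $p_n$ (though it follows at once), and your optional Koszul-telescoping sketch would actually need the weighted combination $\sum_i(-1)^i(k-i)h_{k-i}e_i$ rather than an unweighted alternating sum; neither affects correctness.
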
 
\begin{proof} The well-know identity \cite{M} in $\bold{H}^+$
$$p_k=\sum_{i=0}^{k-1} (-1)^i s_{k-i,1_i}$$
implies 
$$\chi\circ\mathcal P_k=\sum_{i=0}^{k-1}(-1)^i[\mathcal P_k(i)]=\sum_{i=0}^{k-1} (-1)^i[\mathcal T_{V_{k-i,1_i}}]=p_k\circ\chi.$$
The second equality is similar.
\end{proof}

Now we are going to use $\mathcal P_n$ and $\mathcal P_n^*$ in order
to show that $p_n$ and $p^*_n$ satisfy the relations
(\ref{boson}). 
The equality $[p_n,p_m]=0$ follows directly from the
commutativity of the tensor product, and the equality  $[p^*_n,p^*_m]=0$ follows from
Lemma \ref{lemma3} and the commutativity of the tensor product. So it
remains to prove that
$$[p_m^*,p_k]=k\delta_{m,k}.$$
At this time we do not have a truly categorical proof of the last identity. Instead we will pass to the Grothendieck ring of
$\TT^+$.
Note that by Proposition \ref{prop1} it is sufficient to prove the
following identity in $\mathcal K(\TT^+)$: 
\begin{equation}\label{hookid}
\sum_{i=0}^{k-1}\sum_{j=0}^{m-1}(-1)^{i+j}[(V_{(k-i,1_i)}\otimes
  (V_{(m-j,1_j)})_*/V_{(k-i,1_i),(m-j,1_j)}]=k\delta_{m,k}[\mathbb C].\nonumber
\end{equation}

We are going to use (\ref{socmult}). If $\lambda$ is a hook, then
$N^\lambda_{\gamma,\nu}\leq  1$. Moreover, $N^\lambda_{\gamma,\nu}=1$
implies that $\gamma$ and $\nu$ are also hooks. If
$\lambda=(c,1_d)$, $\nu=(p,1_q)$ and $\gamma=(s,1_t)$, then $N^\lambda_{\gamma,\nu}=1$
if and only if $s+p=c,t+q=d$ or $s+p=c-1,t+q=d+1$.
If $\gamma=(p,1_q)$ we set $s(\gamma)=(-1)^q$. The above implies that for any $\nu$ such that $|\nu|< m$ and for any hook $\gamma$
\begin{equation}\label{auxp}
\sum_{j=0}^{m-1}(-1)^j N^{m-j,1_j}_{\nu,\gamma}=(-1)^{s(\gamma)}\delta_{\nu,\emptyset}\delta_{|\gamma|,m}.
\end{equation}
Now by  (\ref{socmult}) we have 
$$\sum_{i=0}^{k-1}\sum_{j=0}^{m-1}(-1)^{i+j}[(V_{(k-i,1_i)}\otimes
  (V_{(m-j,1_j)})_*)/V_{(k-i,1_i)),(m-j,1_j)}:V_{\mu,\nu}]=\sum_{i=0}^{k-1}\sum_{j=0}^{m-1}\sum_{\gamma}(-1)^{i+j}N^{(k-i,1_i)}_{\mu,\gamma}N^{(m-j,1_j)}_{\nu,\gamma}.$$
By (\ref{auxp}) we obtain
$$\sum_{i=0}^{k-1}\sum_{j=0}^{m-1}\sum_{\gamma}(-1)^{i+j}N^{(k-i,1_i)}_{\mu,\gamma}N^{(m-j,1_j)}_{\nu,\gamma}=
\sum_{\gamma}\left(\sum_{i=0}^{k-1}(-1)^i N^{m-i,1_i}_{\mu,\gamma}\right)\left(\sum_{j=0}^{m-1}(-1)^j N^{m-j,1_j}_{\nu,\gamma}\right)=
\sum_{\gamma}\delta_{\mu,\emptyset}\delta_{|\gamma|,k}\delta_{\nu,\emptyset}\delta_{|\gamma|,m}=k\delta_{k,m}\delta_{\nu,\emptyset}\delta_{\mu,\emptyset}.$$

\end{document}